\def\NN{\mathbb N}
\def\ZZ{\mathbb Z}
\def\QQ{\mathbb Q}
\def\RR{\mathbb R}
\def\Ql{\mathbb{Q}_\ell}
\def\F{\mathbb F}
\def\P{\mathbb P}
\def\OK{\mathcal{O}_K}
\def\fp{\mathfrak p}
\newcommand{\frob}{\operatorname{Frob}}
\theoremstyle{plain}
\numberwithin{equation}{section}
\newtheorem{theorem}{Theorem}[section]
\newtheorem{cor}[theorem]{Corollary}
\newtheorem{prop}[theorem]{Proposition}
\newtheorem{conjecture}[theorem]{Conjeture}
\newtheorem{lemma}[theorem]{Lemma}
\newtheorem{mydef}[theorem]{Definition}
\newtheorem{remark}[theorem]{Remark}
\newtheorem{example}[theorem]{Example}
\numberwithin{equation}{section}
\numberwithin{theorem}{section}
\def\Hone{H^1 _{\acute{e}t} (X, \overline{\mathbb Q_{\ell}})}
\def\Hzero{H^0 _{\acute{e}t} (X, \overline{\mathbb Q_{\ell}})}
\def\Htwo{H^2 _{\acute{e}t} (X, \overline{\mathbb Q_{\ell}})}
\def\HiX{H^i _{\acute{e}t} (X, \overline{\mathbb Q_{\ell}})}
\def\HizeroX{H^{i_0} _{\acute{e}t} (X, \overline{\mathbb Q_{\ell}})}
\def\HstarX{H^{*} _{\acute{e}t} (X, \overline{\mathbb Q_{\ell}})}
\begin{document}

\title{Newman's Conjecture in Function Fields}

\author[Chang]{Alan Chang}
\email{\textcolor{blue}{\href{mailto:ac@math.uchicago.edu}{ac@math.uchicago.edu}}}
\address{Department of Mathematics, University of Chicago, Chicago, IL 60637}

\author[Mehrle]{David Mehrle}
\email{\textcolor{blue}{\href{mailto:dmehrle@cmu.edu}{dmehrle@cmu.edu}}}
\address{Department of Mathematical Sciences, Carnegie Mellon University, Pittsburgh, PA 15289}

\author[Miller]{Steven J. Miller}
\email{\textcolor{blue}{\href{mailto:sjm1@williams.edu}{sjm1@williams.edu}},  \textcolor{blue}{\href{Steven.Miller.MC.96@aya.yale.edu}{Steven.Miller.MC.96@aya.yale.edu}}}
\address{Department of Mathematics and Statistics, Williams College, Williamstown, MA 01267}

\author[Reiter]{Tomer Reiter}
\email{\textcolor{blue}{\href{mailto:treiter@andrew.cmu.edu}{treiter@andrew.cmu.edu}}}
\address{Department of Mathematical Sciences, Carnegie Mellon University, Pittsburgh, PA 15289}

\author[Stahl]{Joseph Stahl}
\email{\textcolor{blue}{\href{mailto:jstahl@bu.edu}{jstahl@bu.edu}}}
\address{Department of Mathematics \& Statistics, Boston University, Boston, MA 02215}

\author[Yott]{Dylan Yott}
\email{\textcolor{blue}{\href{mailto:dyott@math.berkeley.edu}{dyott@math.berkeley.edu}}}
\address{Department of Mathematics, University of California, Berkeley, Berkeley, CA 94720}

\subjclass[2010]{11M20, 11M26, 11Y35, 11Y60, 14G10}
%\green{REMOVED 11M50: Connections to random matrices. OTHERWISE SEEMS GOOD.}

\keywords{Newman's conjecture, zeros of the $L$--functions, function fields.}

\date{\today}

\thanks{This work was supported by NSF grants DMS-1347804, DMS-1265673, Williams College, and the PROMYS program. The authors thank Noam Elkies, David Geraghty, Rob Pollack, Glenn Stevens, and Keith Conrad for their insightful comments and support.}

%%%%%%%%%%%%%%%%%%%%%%%%%%%%%%%%%%%%%%%%%%%%%%%%%%%%%%%%%%%%%%%%%%%%%%%%%%%%%%%%%%%%%%%%%%%%%%%%%%%%%%%%%%%%%%%%%%%%%%%%%%%%%%%%%%%%
% ABSTRACT %
%%%%%%%%%%%%%%%%%%%%%%%%%%%%%%%%%%%%%%%%%%%%%%%%%%%%%%%%%%%%%%%%%%%%%%%%%%%%%%%%%%%%%%%%%%%%%%%%%%%%%%%%%%%%%%%%%%%%%%%%%%%%%%%%%%%%

\begin{abstract}
De Bruijn and Newman introduced a deformation of the completed Riemann zeta function $\zeta$, and proved there is a real constant $\Lambda$ which encodes the movement of the nontrivial zeros of $\zeta$ under the deformation. The Riemann hypothesis is equivalent to the assertion that $\Lambda\leq 0$. Newman, however, conjectured that $\Lambda\geq 0$, remarking, ``the new conjecture is a quantitative version of the dictum that the Riemann hypothesis, if true, is only barely so.'' Andrade, Chang and Miller extended the machinery developed by Newman and Polya to $L$-functions for function fields. In this setting we must consider a modified Newman's conjecture: $\sup_{f\in\mathcal{F}} \Lambda_f \geq 0$, for $\mathcal{F}$ a family of $L$-functions.

We extend their results by proving this modified Newman's conjecture for several families of $L$-functions. In contrast with previous work, we are able to exhibit specific $L$-functions for which $\Lambda_D = 0$, and thereby prove a stronger statement: $\max_{L\in\mathcal{F}} \Lambda_L = 0$. Using geometric techniques, we show a certain deformed $L$-function must have a double root, which implies
$\Lambda = 0$. For a different family, we construct particular elliptic curves with $p + 1$ points over $\mathbb{F}_p$.
By the Weil conjectures, this has either the maximum or minimum possible number of points over $\mathbb{F}_{p^{2n}}$. The fact that $\#E(\mathbb{F}_{p^{2n}})$ attains the bound tells us that the associated $L$-function satisfies $\Lambda = 0$.
\end{abstract}

\maketitle

%\green{REORGANIZED THE INTRODUCTION, COMBINED WITH THE SECTION ON CLASSICAL NEWMAN'S CONJECTURE.}

%%%%%%%%%%%%%%%%%%%%%%%%%%%%%%%%%%%%%%%%%%%%%%%%%%%%%%%%%%%%%%%%%%%%%%%%%%%%%%%%%%%%%%%%%%%%%%%%%%%%%%%%%%%%%%%%%%%%%%%%%%%%%%%%%%%%
% INTRODUCTION %
%%%%%%%%%%%%%%%%%%%%%%%%%%%%%%%%%%%%%%%%%%%%%%%%%%%%%%%%%%%%%%%%%%%%%%%%%%%%%%%%%%%%%%%%%%%%%%%%%%%%%%%%%%%%%%%%%%%%%%%%%%%%%%%%%%%%
\section{Introduction}

Newman's conjecture, as originally formulated, is a statement about the zeros of a deformation of the completed Riemann zeta function. This deformation was introduced by P\'{o}lya to attack the Riemann hypothesis, but Newman's conjecture regarding this deformation is in fact an almost counter-conjecture to the Riemann hypothesis. The classical Newman's conjecture is explained below in Section \ref{sec:classicalnewman}.

%%%%%%%%%%%%%%%%%%%%%%%%%%%%%%%%%%%%%%%%%%%%%%%%%%%%%%%%%%%%%%%%%%%%%%%%%%%%%%%%%%%%%%%%%%%%%%%%%%%%%%%%
%SUBSECTION: THE CLASSICAL NEWMAN'S CONJECTURE
%%%%%%%%%%%%%%%%%%%%%%%%%%%%%%%%%%%%%%%%%%%%%%%%%%%%%%%%%%%%%%%%%%%%%%%%%%%%%%%%%%%%%%%%%%%%%%%%%%%%%%%%
\subsection{The Classical Newman's Conjecture}\label{sec:classicalnewman}

Instead of working with the Riemann zeta function $\zeta(s)$ itself, define the completed Riemann zeta function
\begin{equation}
\label{eq:completedzeta}
\xi(s)\ :=\ \frac{s(s-1)}{2}\, \pi^{-s/2}\, \Gamma \left( \frac{s}{2} \right) \zeta(s).
\end{equation}
This has the effect of eliminating the trivial zeros of $\zeta(s)$, but keeping all of the nontrivial ones. Additionally, the functional equation for $\xi(s)$  is simpler than that for the Riemann zeta function: $\xi(s) = \xi(1-s)$. To simplify the analysis further, introduce
\begin{equation}
\label{eq:changevariableszeta}
 \Xi(x)\ :=\ \xi\left( \frac{1}{2} + i x \right)
\end{equation}
to shift the zeros of $\xi(s)$ to lie along the real line. Recall that because $\xi$ is analytic and real valued on the real line, then $\xi(\overline{s}) = \overline{\xi(s)}$. Combining this fact and the functional equation for $\xi$, we have that $\Xi(x) \in \RR$ whenever $x \in \RR$. Now let $\Phi(u)$ denote the Fourier transform\footnote{We normalize so that the Fourier transform of $f(x)$ is $\int_{-\infty}^\infty f(x) e^{-ixu}du$.} of $\Xi(x)$. Because $\Xi(x)$ decays rapidly as $x \to \infty$, we may introduce a ``time'' parameter into the inverse Fourier transform.

\begin{mydef}
The \textit{deformed Riemann zeta function} $\Xi_t$ is
\begin{equation}
\Xi_t(x)\ :=\ \int_0^\infty\!e^{tu^2}\Phi(u)\left(e^{iux} + e^{-iux}\right)\,du.
\end{equation}
\end{mydef}

Note that $\Xi_0(x) = \Xi(x)= \xi(1/2 + i x)$ agrees with \eqref{eq:changevariableszeta}. This deformation $\Xi_t(s)$ is the function that P\'{o}lya hoped to use to attack the Riemann Hypothesis, because the Riemann Hypothesis is equivalent to the statement that all of the zeros of $\Xi_0(x)$ are real. De Bruijn managed to prove a related statement.

\begin{lemma}[{De Bruijn \cite[Theorem 13]{dB}}]
\label{lem:realzeros}
If $t \in \RR$ is such that $\Xi_t$ has only real zeros, then for all $t' \geq t$, $\Xi_{t'}$ has only real zeros.
\end{lemma}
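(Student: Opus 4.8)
The plan is to rewrite the deformation as the action of a single explicit operator and then invoke the principle that ``Gaussian smoothing preserves real-rootedness.'' Write $D=\frac{d}{dx}$. Since $\Phi$ is real and even (it is the Fourier transform of the real, even, rapidly decreasing function $\Xi$), the definition of $\Xi_t$ unfolds to $\Xi_t(x)=\int_{-\infty}^{\infty}e^{tu^2}\Phi(u)\,e^{iux}\,du$, an integral that converges and is entire in $x$ because $e^{tu^2}\Phi(u)$ still decays doubly exponentially. Now $\exp(-\lambda D^2)$ acts on $e^{iux}$ by the scalar $e^{-\lambda(iu)^2}=e^{\lambda u^2}$, so interchanging this operator with the integral (legitimate by the decay of $\Phi$) yields the operator identity
\[
\Xi_{t'}\ =\ \exp\!\big(-(t'-t)\,D^2\big)\,\Xi_t ,\qquad t'\ge t,
\]
where $\exp(-\lambda D^2)=\sum_{k\ge 0}\frac{(-\lambda)^k}{k!}D^{2k}$. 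It therefore suffices to prove: if $g$ is a real entire function of order at most $2$ with only real zeros, then $\exp(-\lambda D^2)g$ has only real zeros for every $\lambda>0$. By hypothesis $\Xi_t$ is such a $g$: realness and the zero condition are assumed, and the order bound follows from the growth of the defining integral.

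For the key claim, first take $g=P$ a real polynomial with only real zeros. If $c\in\RR\setminus\{0\}$ then $P+cP'$ has only real zeros, because $\frac{d}{dx}\big(e^{x/c}P(x)\big)=\frac1c e^{x/c}\big(P(x)+cP'(x)\big)$ and $e^{x/c}P(x)$ lies in the Laguerre--P\'olya class, which is closed under differentiation. Factoring $1-\tfrac{\lambda}{n}D^2=\big(1-\sqrt{\lambda/n}\,D\big)\big(1+\sqrt{\lambda/n}\,D\big)$ and applying this twice shows $\big(1-\tfrac{\lambda}{n}D^2\big)^n P$ has only real zeros for all $n$; since $\big(1-\tfrac{\lambda}{n}D^2\big)^n P\to\exp(-\lambda D^2)P$ coefficient by coefficient as $n\to\infty$ and the limit shares the leading coefficient of $P$ (so is not identically zero), Hurwitz's theorem gives that $\exp(-\lambda D^2)P$ has only real zeros. (Explicitly, $\exp(-\lambda D^2)x^n=(2\lambda)^{n/2}He_n\!\big(x/\sqrt{2\lambda}\big)$, a rescaled Hermite polynomial.) Finally, approximate $g=\Xi_t$ locally uniformly by real-rooted polynomials $P_m$ --- truncate its Hadamard product and replace any factor $e^{-\alpha x^2}$ with $\alpha\ge 0$ by $(1-\alpha x^2/m)^m$ --- verify that $\exp(-\lambda D^2)P_m\to\exp(-\lambda D^2)g$ uniformly on compacta, and conclude by Hurwitz once more, noting that $\Xi_{t'}\not\equiv 0$ since its Fourier density $e^{t'u^2}\Phi(u)$ is not the zero function.

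The genuinely delicate point is not any of these ideas but the analytic bookkeeping underneath the last paragraph: one must (i) confirm that $\Xi_t$ really belongs to the Laguerre--P\'olya class --- that is, establish that the integral defining it has order at most $2$ and that the quadratic coefficient in its Hadamard exponential factor is $\le 0$ --- and (ii) justify that the unbounded operator $\exp(-\lambda D^2)$ commutes with the chosen polynomial approximation uniformly on compact sets. Both follow from De Bruijn's growth estimates for $\Xi_t$, in which the factor $e^{tu^2}$ controls everything; in practice one simply cites \cite[Theorem 13]{dB}, and the sketch above records why the statement is true.
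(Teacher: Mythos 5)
The paper offers no proof of this lemma at all: it is quoted directly from de Bruijn, with \cite[Theorem 13]{dB} standing in for the argument. So the only question is whether your sketch would stand on its own. Its skeleton is the classical one (P\'olya's ``universal factors'' / backward heat flow): the identity $\Xi_{t'}=e^{-(t'-t)D^2}\Xi_t$ is correct and the interchange of the series with the integral is indeed justified by the doubly exponential decay of $\Phi$, and your polynomial case is done correctly --- the factoring of $1-\tfrac{\lambda}{n}D^2$ into two Hermite--Poulain steps, the coefficientwise limit, Hurwitz, and the Hermite-polynomial identity are all fine.

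The genuine gap is in the passage from polynomials to $\Xi_t$, and it has two parts. First, your ``key claim'' is false as stated: for a real entire $g$ of order $2$ with only real zeros, $e^{-\lambda D^2}g$ need not even exist (at $x=0$ the series $\sum_k\frac{(-\lambda)^k}{k!}g^{(2k)}(0)$ for $g(x)=e^{-x^2}$ has terms of size $\lambda^k\binom{2k}{k}$ and diverges for $\lambda\ge 1/4$), and ``order $\le 2$ with only real zeros'' is weaker than membership in the Laguerre--P\'olya class, which requires the Gaussian factor $e^{-\alpha x^2}$ with $\alpha\ge 0$, equivalently being a locally uniform limit of real-rooted polynomials. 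For $\Xi_t$ this is repairable --- the integral representation shows $\Xi_t$ has order $1$, so real zeros do force LP membership, and $e^{-\lambda D^2}\Xi_t$ is defined via the integral rather than the series on an abstract order-$2$ function --- but the claim should be restated that way. Second, and more seriously, the step ``$e^{-\lambda D^2}P_m\to e^{-\lambda D^2}\Xi_t$ uniformly on compacta'' does not follow from $P_m\to\Xi_t$ locally uniformly: the operator involves derivatives of every order of $P_m$, with $\deg P_m\to\infty$, so one needs approximants chosen with quantitative coefficient control (de Bruijn works with approximations adapted to the integral representation, so that the multiplier $e^{\lambda u^2}$ acts on the kernel directly). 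That uniform-convergence estimate is precisely the analytic content of \cite[Theorem 13]{dB}, and your sketch discharges it by citing that very theorem, which is circular if the aim was an independent proof --- though, to be fair, no less complete than the paper, which gives only the citation. To close the gap you would either carry out those estimates using the decay of $\Phi$, or do as the authors do and simply cite de Bruijn.
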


P\'{o}lya wanted to show that $\Xi_t$ has only real zeros for all $t \in \RR$, which would imply the Riemann Hypothesis. Unfortunately, this is not the case, as the next lemma shows.

\begin{lemma}[{Newman, \cite[Theorem 3]{New}}]
\label{lem:nonrealzero} There is some $t \in \RR$ such that $\Xi_t$ has a nonreal zero.
\end{lemma}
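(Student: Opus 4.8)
The plan is to follow Newman's original strategy, which relies on analyzing the behavior of $\Xi_t$ for \emph{large negative} $t$. The key observation is that as $t \to -\infty$, the Gaussian factor $e^{tu^2}$ suppresses contributions away from $u = 0$, so the small-$u$ behavior of $\Phi(u)$ dominates. First I would recall (or derive) the asymptotic behavior of $\Phi(u)$ near $u = 0$: in fact $\Phi(u)$ does not vanish to infinite order at the origin, and one has an expansion whose leading terms are governed by the $\Gamma$-factor and the pole structure of $\zeta$. Newman's insight is that $\Phi(u)$ behaves, up to lower-order corrections, like a combination of exponentials $e^{u}$ and simple polynomial/rational factors near $u=0$, reflecting the fact that $\Xi(x)$ decays like $e^{-\pi |x| / ... }$-type behavior is replaced by the precise superexponential decay coming from $\Gamma(s/2)$ having no zeros.

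\medskip

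Concretely, the second step is to perform the Gaussian integral after substituting the asymptotic form of $\Phi$. For $t < 0$ large in absolute value, completing the square in $\int_0^\infty e^{tu^2} \Phi(u)(e^{iux}+e^{-iux})\,du$ shows that $\Xi_t(x)$ is, to leading order, a Gaussian-in-$x$ times an entire function of controlled growth, and one can read off that $\Xi_t$ has nonreal zeros by locating them explicitly near the real axis pushed off by an amount governed by $|t|$. Equivalently, one shows that $\Xi_t(x)$, viewed as a function of the complex variable $x$, has order of growth $2$ and that its Fourier/Laplace representation forces zeros with nonzero imaginary part once $t$ is sufficiently negative — this is where the analysis is genuinely delicate, because one must control the error terms in the asymptotic expansion of $\Phi$ uniformly enough to guarantee the perturbed zeros remain nonreal.

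\medskip

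The main obstacle, then, is precisely this quantitative control: establishing a sufficiently sharp asymptotic expansion of $\Phi(u)$ as $u \to 0^+$ (including an honest error bound), and then showing the Gaussian smoothing by $e^{tu^2}$ does not wash out the feature of $\Phi$ responsible for the nonreal zeros. A clean way to package this is to argue by contradiction using Lemma~\ref{lem:realzeros}: if $\Xi_t$ had only real zeros for all $t \in \RR$, then in particular it would for all $t$ arbitrarily negative, and by the Hadamard factorization $\Xi_t(x) = \Xi_t(0)\prod_n (1 - x^2/a_n(t)^2)$ with all $a_n(t)$ real, one gets strong positivity constraints (e.g. on the coefficients of the Taylor expansion of $\Xi_t$ at $0$, or on $\Xi_t$ restricted to the imaginary axis: $\Xi_t(iy) > 0$ for all real $y$). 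One then contradicts such a constraint by direct computation with the leading asymptotics of $\Phi$ — for instance by exhibiting a real $y$ and a sufficiently negative $t$ with $\Xi_t(iy) < 0$. This reduces the problem to a single sign computation, isolating all the analytic difficulty into the asymptotic estimate for $\Phi$ near the origin.

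\medskip

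Finally, I would double-check the edge cases: that the interchange of limit and integral is justified (dominated convergence, using the rapid decay of $\Phi$ and $t<0$), and that the function $\Xi_t$ is genuinely entire of finite order so Hadamard factorization applies — both are standard given the decay properties of $\Xi$ recorded above, but they are needed to make the contradiction argument rigorous.
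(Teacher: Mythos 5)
Your proposal has a fatal flaw at exactly the point where you claim the work gets done. The paper itself offers no proof of this lemma --- it is quoted from Newman \cite[Theorem 3]{New} --- so your sketch has to stand on its own, and its central reduction cannot work: you propose to contradict the only-real-zeros hypothesis by exhibiting a real $y$ and a very negative $t$ with $\Xi_t(iy)<0$. But for \emph{every} real $t$ and every real $y$,
\begin{equation*}
\Xi_t(iy)\ =\ \int_0^\infty e^{tu^2}\,\Phi(u)\,\bigl(e^{uy}+e^{-uy}\bigr)\,du\ >\ 0,
\end{equation*}
because $\Phi(u)>0$ (a classical fact about this kernel) and the hyperbolic-cosine factor is positive. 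Positivity on the imaginary axis is therefore unconditional; it is not a consequence of reality of zeros that can be violated, so the ``single sign computation'' you isolate can never produce a contradiction. The same objection applies to naive sign constraints on the Taylor coefficients at $0$: the coefficient of $x^{2m}$ is $(-1)^m$ times a positive moment of $e^{tu^2}\Phi(u)$, again unconditionally. The constraints that reality of zeros actually imposes are Tur\'an/Laguerre-type \emph{inequalities among moments}, and your sketch never engages with those.

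Beyond that, the surrounding analysis is heuristic in ways that do not obviously lead anywhere: the relevant decay of $\Phi$ is double-exponential, so $\Xi_t$ is entire of order $1$ (not $2$), and the $t\to-\infty$ localization you describe produces (after normalization) the transform of a positive mass concentrated near $u=0$, which does not visibly exhibit nonreal zeros --- this is precisely why Newman's theorem is not proved by ``locating the zeros explicitly.'' The argument the paper cites is indirect: one assumes $\Xi_t$ has only real zeros for all $t$, extracts quantitative consequences of that assumption for the moments of $e^{tu^2}\Phi(u)\,du$ (Laguerre/Tur\'an-type inequalities, equivalently control of quantities like $\Xi_t''(0)/\Xi_t(0)=-2\sum_n a_n(t)^{-2}$) as $t$ varies under the heat-type flow linking the $\Xi_t$, and derives a contradiction with properties of $\Phi$; no explicit nonreal zero is ever produced. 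To repair your write-up you would need to replace the impossible target $\Xi_t(iy)<0$ with a genuine consequence of reality of zeros (e.g.\ a Laguerre inequality) and then actually carry out the moment analysis, which is where all the content of Newman's Theorem 3 lies; alternatively, simply cite \cite[Theorem 3]{New} as the paper does.
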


However, it is possible to place bounds on this $t$; this is how Newman salvaged P\'{o}lya's strategy. By combining the previous two lemmas, we may define the De Bruijn--Newman Constant.

\begin{mydef} The \emph{De Bruijn--Newman Constant} $\Lambda \in \RR$ is the value such that
\vspace*{-0.2em}
\begin{itemize}
\itemsep0em
\item if $t \geq \Lambda$, then $\Xi_t$ has only real zeros;
\item if $t < \Lambda$, then $\Xi_t$ has a non-real zero.
\end{itemize}
\vspace*{-0.2em}
Such a constant exists because of \eqref{lem:realzeros} and \eqref{lem:nonrealzero}.
\end{mydef}

This allows us to rephrase the Riemann hypothesis yet again. The Riemann Hypothesis is true if and only if $\Lambda \leq 0$, that is, if and only if $\Xi_0$ has only real zeros. On the other hand, Newman made the following conjecture.

\begin{conjecture}[{Newman \cite[Remark 2]{New}}]
\label{conj:newmansconjecture}
Let $\Lambda$ be the De Bruijn-Newmann constant. Then $\Lambda\geq 0$.
\end{conjecture}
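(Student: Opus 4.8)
The final statement, Conjecture~\ref{conj:newmansconjecture}, is the assertion $\Lambda \geq 0$ for the De Bruijn--Newman constant; this is a famously difficult problem, and the plan below sketches the strategy I would pursue while isolating the step that makes it a genuine theorem rather than a computation. The plan is to argue by contradiction: suppose $\Lambda < 0$. Then by the definition of $\Lambda$ together with De~Bruijn's Lemma~\ref{lem:realzeros}, for every $t \in [\Lambda, 0]$ the function $\Xi_t$ has only real zeros, and (generically) simple ones. From the definition $\Xi_t$ solves the backward heat equation $\partial_t \Xi_t = -\partial_x^2 \Xi_t$ (the weight $e^{tu^2}$ is exactly $e^{-t(iu)^2}$ on the Fourier side), so standard computations show that as long as the zeros $x_j(t)$ of $\Xi_t$ stay real and simple they evolve by a Calogero--Moser--type system $\dot x_j = c\sum_{k \neq j}(x_j - x_k)^{-1}$, whose $t$-decreasing direction is the smoothing, ``attractive'' one. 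The hypothesis $\Lambda < 0$ says precisely that no collision occurs on $[\Lambda,0]$, so this ODE system is valid throughout that interval and one inherits Lyapunov/monotonicity functionals for the flow (free-energy-type sums involving $\log|x_j - x_k|$ and $\sum_j \dot x_j^{\,2}$).

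The heart of the argument — and the step I expect to be the main obstacle — is a quantitative smoothing estimate: if $\Xi_t$ is real-rooted for all $t$ in a fixed interval $[\Lambda,0]$ with $\Lambda < 0$, then the consecutive gaps $\gamma_{n+1} - \gamma_n$ between zeros of $\Xi = \Xi_0$, normalized by the local mean spacing $\overline\delta_n$ coming from the Riemann--von Mangoldt formula $N(T) = \frac{T}{2\pi}\log\frac{T}{2\pi} - \frac{T}{2\pi} + O(\log T)$, must concentrate at $1$ in a suitably averaged ($L^2$) sense as the height tends to infinity; equivalently, a ``local discrepancy'' of the zero set of $\zeta$ is forced to be $o$ of its trivial size. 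Establishing this is where essentially all of the difficulty lies: one must integrate the ODE monotonicity over the time interval $[\Lambda,0]$ uniformly over all heights, with error terms smaller than the average gap ($\approx 2\pi/\log T$ at height $T$), while controlling the near-collisions that the hypothesis only barely excludes.

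Granting the rigidity estimate, the contradiction comes from comparing this forced local equidistribution with what is known \emph{unconditionally} about the zeros of $\zeta$: their normalized gaps are not asymptotically equal — there are, infinitely often, consecutive zeros a definite factor closer than the local mean and a definite factor farther than it (small/large gap results), and more crudely the density of zeros genuinely grows like $\frac{1}{2\pi}\log T$ rather than being constant on any fixed scale after normalization. Any of these facts is incompatible with the rigidity produced above, so the assumption $\Lambda < 0$ is untenable and $\Lambda \geq 0$.

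The soft parts of this scheme — the contradiction setup, the appeal to De~Bruijn's lemma, and writing down the zero ODEs — are routine; the weight of the proof is entirely in making ``backward heat flow of fixed positive duration $\Rightarrow$ equidistributed zeros'' quantitative and uniform in the height, and in matching its error terms against the genuinely delicate unconditional input on $\zeta$. By way of contrast, the classical ``Lehmer pair'' approach — taking an unusually close pair of zeros $\gamma_{n+1}-\gamma_n$ and using the exact two-body solution of the ODE to force a collision at some time $t_0 \approx -c(\gamma_{n+1}-\gamma_n)^2 < 0$, hence $\Lambda \geq t_0$ — only yields negative lower bounds for $\Lambda$ unless one already knows the gaps shrink fast enough along a subsequence, which is itself out of reach; this is why the averaged rigidity estimate, rather than a single close pair, is the right tool.
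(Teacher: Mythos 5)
There is a genuine gap, and it is worth being clear about the status of the statement itself: in this paper Conjecture~\ref{conj:newmansconjecture} is exactly that --- a conjecture, quoted from Newman, with no proof supplied. The paper never attempts to prove $\Lambda \geq 0$ for the Riemann $\xi$-function; its theorems concern the function-field analogues, where $\sup_D \Lambda_D = 0$ is obtained by exhibiting an explicit $D$ (e.g.\ $D = x^q - x$) whose $L$-function has a double zero, so that $\Lambda_D = 0$ exactly. So there is no ``paper proof'' to compare against, and your text should not be read as verifying anything the authors claim to establish.

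As a proof of the classical statement, your proposal is a strategy outline rather than an argument: the entire weight rests on the quantitative rigidity step (``backward heat flow of fixed duration $|\Lambda|$ forces the normalized zero gaps of $\Xi_0$ to concentrate at $1$''), which you explicitly grant rather than prove, and nothing in the surrounding material (the contradiction setup, De Bruijn's lemma, the Calogero--Moser-type ODEs) substitutes for it. Two further points would need real work even inside your scheme: the zero ODE $\dot x_j = c\sum_{k\neq j}(x_j - x_k)^{-1}$ involves infinitely many zeros and must be renormalized (principal value against the density $\tfrac{1}{2\pi}\log T$) and justified uniformly in height, including near-collisions that the hypothesis $\Lambda<0$ only barely rules out; and the unconditional inputs you invoke at the end (gaps a ``definite factor'' smaller and larger than the mean) are known only in rather weak Selberg-type forms, so one must check that whatever rigidity one can actually extract from the flow is strong enough to clash with them --- in the eventual Rodgers--Tao proof of this conjecture the contradiction is instead obtained against pair-correlation/explicit-formula estimates, precisely because the forced structure there is an approximate local arithmetic progression, a much more specific conclusion than $L^2$-concentration of gaps. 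So the approach is reasonable in outline and historically vindicated, but as submitted the central estimate is missing, and without it the argument does not prove $\Lambda \geq 0$.
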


Note that if both Newman's conjecture \eqref{conj:newmansconjecture} and the Riemann Hypothesis are true, then it must be the case that $\Lambda = 0$. On this, Newman remarked: ``This new conjecture is a quantitative version of the dictum that the Riemann hypothesis, if true, is only barely so'' \cite[Remark 2]{New}. It is remarkable just how precise the bounds on $\Lambda$ are: \cite{SGD} achieved the current best-known bound of $\Lambda \geq -1.14541 \times 10^{-11}.$ To find this bound, Saouter, Gourdon and Demichel build on the work of Csordas, Smith and Varga \cite{CSV}, who use differential equations describing the motion of the zeros under deformation to demonstrate that atypically close pairs of zeros yield lower bounds on $\Lambda$. 

%\green{ADDED THE BEST KNOWN BOUNDS ON $\Lambda$ WITH SOME EXPLANATION. ADDED BOUNDS FOR STOPPLE LATER.}

These ideas have since been translated to many different $L$-functions beyond the Riemann zeta function. Stopple \cite{Sto} showed that there is a real constant $\Lambda_{\text{Kr}}$ analogous to the De Bruijn--Newman constant for quadratic Dirichlet $L$-functions, and Andrade, Chang and Miller \cite{ACM} expanded this to state a version of Newman's conjecture for automorphic $L$-functions. Stopple established bounds on $\Lambda_{\text{Kr}}$ in the case of quadratic Dirichlet $L$-functions; in particular, for the $L$ function corresponding to the quadratic character modulo $D = - 175990483$, $\Lambda_{\text{Kr}} > -1.13 \times 10^{-7}$. The results on lower bounds are extended to automorphic $L$-functions by \cite{ACM}. Most recently, Andrade, Chang and Miller investigated in \cite{ACM} the analogue of the De Bruijn--Newman constant for function field $L$-functions. This is the setting in which we work, so we describe the translation of this framework to the function field setting in Section \ref{sec:newmanfnfield}.

%\green{ADDED BOUNDS FOR STOPPLE.}

%%%%%%%%%%%%%%%%%%%%%%%%%%%%%%%%%%%%%%%%%%%%%%%%%%%%%%%%%%%%%%%%%%%%%%%%%%%%%%%%%%%%%%%%%%%%%%%%%%%%%%%%
%%% SUBSECTION: THIS PAPER %%%%%%
%%%%%%%%%%%%%%%%%%%%%%%%%%%%%%%%%%%%%%%%%%%%%%%%%%%%%%%%%%%%%%%%%%%%%%%%%%%%%%%%%%%%%%%%%%%%%%%%%%%%%%%%
\subsection{This paper}

%\green{ADDED A BREIF EXPOSITION TO QUICKLY INTRODUCE THE FUNCTION FIELDS CASE, THEN EXPLAINED WHAT OUR CONTRIBUTION IS. THIS IS THE OLD INTRODUCTION, BUT MOVED TO FOLLOW  CLASSICAL NEWMAN}

In the function field setting, a similar setup is possible. We will work in this setting, which we develop in Section \ref{sec:newmanfnfield}. It is in this setting that we resolve several versions of the generalized Newman's conjecture first considered by Andrade, Chang and Miller in \cite{ACM}. We recall some of their work in section \ref{sec:previouswork}. We then prove our main result \ref{thm:mainresultintro} in section \ref{sec:results}, where this theorem is stated as theorem \ref{thm:mainresult}.

Over function fields $\F_q[x]$, each quadratic Dirichlet $L$-function $L(s, \chi_D)$ also gives rise to a constant $\Lambda_D$. However, there is very different behavior in this case. For one, it is possible that $\Lambda_D = - \infty$, as in Remark \ref{rmk:lambdaDminisinf}. Therefore, we consider the supremum of the De Bruijn--Newman constants over a family of $L$-functions, so the appropriate analogue of Newman's conjecture becomes the following.

\begin{conjecture}[Newman's Conjecture for Function Fields {\cite[Conjecture 1.8]{ACM}}] Let $\mathcal F$ be a family of $L$ functions over a function field $\F_q[x]$. Then
\begin{equation} \sup_{D \in \mathcal F} \Lambda_D\ =\ 0. \end{equation}
\end{conjecture}

Our main result resolves this conjecture for a wide class of families $\mathcal F$.

\begin{theorem}[Main Result]
\label{thm:mainresultintro} Let $q>0$ be an odd prime power. Let $\mathcal{F}$ be a family of pairs of the form $(D,q)$, where $D\in\mathbb{F}_q[T]$ is monic squarefree polynomial of odd degree at least three, and  $(x^q - x,q)\in\mathcal{F}$. Then if $\Lambda_D$ is the De Bruijn--Newman constant associated to $D$,
\begin{equation}
\sup_{(D,q)\in\mathcal{F}}\Lambda_D\ =\ \max_{(D,q)\in\mathcal{F}}\Lambda_D\ =\ 0 .
\end{equation}
\end{theorem}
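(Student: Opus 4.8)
The plan is to prove the two bounds $\Lambda_D\le 0$ for every $(D,q)\in\mathcal{F}$ and $\Lambda_{x^q-x}\ge 0$ separately. Since $(x^q-x,q)\in\mathcal{F}$ by hypothesis, together these give $\Lambda_{x^q-x}=0$ while $\Lambda_D\le 0$ for all members, and hence $\sup_{(D,q)\in\mathcal{F}}\Lambda_D=\max_{(D,q)\in\mathcal{F}}\Lambda_D=0$.

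For the upper bound: every $L(s,\chi_D)$ attached to a squarefree $D$ obeys the Riemann hypothesis for curves over finite fields (Weil), so the undeformed $\Xi_{D,0}$ has only real zeros; by the function-field analogue of de Bruijn's monotonicity (Lemma~\ref{lem:realzeros}), $\Xi_{D,t}$ then has only real zeros for all $t\ge 0$, whence $\Lambda_D\le 0$.

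The substance is the lower bound $\Lambda_{x^q-x}\ge 0$. The idea is to produce a zero of $\Xi_{x^q-x,0}$ of multiplicity at least two: near a $\mu$-fold real zero the leading effect of decreasing $t$ is an application of $-\partial^2$, which splits the $\mu$-fold zero into a $(\mu-2)$-fold real zero and a complex-conjugate pair, so a multiple zero of $\Xi_{D,0}$ forces $\Lambda_D\ge 0$. I would obtain such a zero by computing the $L$-polynomial of the curve $C:y^2=x^q-x$ over $\mathbb{F}_q$ exactly. Write $L(u,\chi_{x^q-x})=\prod_{j=1}^{2g}(1-\alpha_j u)$, where $2g=\deg(x^q-x)-1=q-1$ and $g=(q-1)/2$ is the genus of $C$. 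To pin down the $\alpha_j$, count $\#C(\mathbb{F}_{q^n})$ for all $n$: the additive map $x\mapsto x^q-x$ on $\mathbb{F}_{q^n}$ is $\mathbb{F}_q$-linear with image $\ker(\mathrm{Tr}_{\mathbb{F}_{q^n}/\mathbb{F}_q})$ and all fibres of size $q$, so the quadratic character sum controlling $\#C(\mathbb{F}_{q^n})$ collapses to $q\sum_{\mathrm{Tr}(v)=0}\chi(v)$; expanding the condition $\mathrm{Tr}(v)=0$ in additive characters and applying the Hasse--Davenport relation to the resulting quadratic Gauss sum yields $\sum_j\alpha_j^{n}=0$ for odd $n$ and $\sum_j\alpha_j^{n}=(q-1)(\epsilon q)^{n/2}$ for even $n$, with $\epsilon:=(-1)^{(q-1)/2}$. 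Since then $\sum_j\alpha_j^{2i}=(q-1)(\epsilon q)^{i}$ for every $i\ge0$, the binomial theorem gives $\sum_j(\alpha_j^2-\epsilon q)^{k}=(q-1)(\epsilon q-\epsilon q)^{k}=0$ for all $k\ge1$; as there are only $2g$ numbers $\alpha_j^2-\epsilon q$, Newton's identities force all their elementary symmetric functions to vanish, so $\alpha_j^2=\epsilon q$ for every $j$ and
\[
L(u,\chi_{x^q-x})\ =\ \bigl(1-\epsilon q\,u^2\bigr)^{g}.
\]
For $q\ge 5$ we have $g\ge 2$, so each of the two roots of $L$, hence each zero of $\Xi_{x^q-x,0}$, has multiplicity $g\ge 2$; combined with the upper bound this gives $\Lambda_{x^q-x}=0$. (Alternatively, $C$ is an $(\mathbb{F}_q,+)$-Galois cover of $\mathbb{P}^1$ ramified over a single point, so its Frobenius eigenvalues are quadratic Gauss sums whose square is $\epsilon q$, which yields the same formula.)

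I expect the main obstacle to be this point-count computation---getting the Hasse--Davenport bookkeeping and the power sums exactly right so that the Newton's-identities collapse goes through---together with making rigorous the ``multiple zero $\Rightarrow\Lambda\ge0$'' step in this setting, which I would draw from the framework recalled in Section~\ref{sec:newmanfnfield}. Finally, the genus-one case $q=3$ falls outside this argument (there $L(u,\chi_{x^3-x})=1+3u^2$ has distinct roots); it must be handled separately, since $y^2=x^3-x$ over $\mathbb{F}_3$ is the supersingular elliptic curve with $\#E(\mathbb{F}_3)=3+1$, for which one instead exploits that $\#E(\mathbb{F}_{3^{2n}})$ attains the Weil bound.
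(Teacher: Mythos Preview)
Your overall strategy matches the paper's exactly: bound $\Lambda_D\le 0$ via the Riemann hypothesis for curves, then show $\Lambda_{x^q-x}=0$ by proving that $L(s,\chi_{x^q-x})$ has a repeated root, invoking the lemma that a double root of $L$ forces $\Lambda_D=0$ (this is precisely the paper's key Lemma in Section~\ref{sec:results}, drawn from \cite{ACM}). You also arrive at the same explicit formula $L(u,\chi_{x^q-x})=(1-\epsilon q\,u^2)^g$, in agreement with the paper's remark that $L=(qT^2\pm 1)^g$.

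Where you diverge is in the computation itself. The paper uses heavier machinery: it invokes the $\F_q$-action on $X:y^2=x^q-x$, decomposes $H^1_{\acute{e}t}(X,\overline{\QQ_\ell})$ as a $G=(\F_q,+)$-representation, and applies a theorem of Katz to identify the Frobenius eigenvalue on each isotypic line as an explicit quadratic Gauss sum; Hasse--Davenport then evaluates these. Your route is more elementary and self-contained: you count points directly, reduce via the additive structure of $x\mapsto x^q-x$ to a character sum over $\ker\mathrm{Tr}$, apply Hasse--Davenport to get the power sums $\sum_j\alpha_j^n$, and then use a clean Newton's-identities trick to force $\alpha_j^2=\epsilon q$ for every $j$. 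Both approaches pass through the same Gauss sums, but yours avoids $\ell$-adic cohomology entirely; the paper's approach buys a more conceptual explanation (the eigenvalues \emph{are} the Gauss sums) at the cost of importing Katz's criterion. Your parenthetical ``alternatively, $C$ is an $(\F_q,+)$-Galois cover of $\P^1$\ldots'' is essentially a one-line summary of what the paper actually does.

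Two small remarks. First, your heuristic ``backward heat flow'' picture for why a multiple zero forces $\Lambda_D\ge 0$ should indeed be replaced by a citation to the lemmas in Section~\ref{sec:newmanfnfield} (equivalently \cite[Lemmas~3.11 and~3.22]{ACM}), as you anticipate. Second, you are right to flag $q=3$: there $g=1$ and $L(u)=1+3u^2$ has simple roots, so the double-root argument does not apply and the explicit formula \eqref{eq:explicitlambda} gives $\Lambda_{x^3-x}=\log\frac{|a_3|}{2\sqrt{3}}=-\infty$. The paper does not single this case out; your observation that it requires separate treatment (or an implicit restriction to $q\ge 5$) is a genuine refinement.
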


To prove this, we first note that $\Lambda_D = 0$ if and only if the $L$-function corresponding to $D$ has a double root. Then we explicitly compute $L(s, \chi_D)$ for $D = x^q - x$ via $\ell$-adic cohomology, the Weil conjectures, and a result of Katz \cite{Katz}. The necessary background for the proof is recalled in Section \ref{sec:weilconjectures}.

Following from Theorem \ref{thm:mainresultintro}, the following conjectures (stated as conjectures \ref{conj:acm1} and \ref{conj:acm2}) of Andrade, Chang and Miller \cite{ACM} are resolved.

\begin{cor}
\label{cor:resolvedconjectures}
 Let $\mathcal F$ be one of the following families of $L$-functions. Then $\sup\limits_{D \in \mathcal F} \Lambda_D = 0$.
\vspace*{-0.7em}
\begin{itemize}
\item $\mathcal F = \{ D \in \F_q[T] \mid \textrm{squarefree, monic, odd degree} \geq 3 \}$;
\item $\mathcal F = \{ D \in \F_q[T] \mid \deg D = 2g + 1,\ 2g+1 = p^k \textrm{ for some prime } p\}$.
\end{itemize}
\end{cor}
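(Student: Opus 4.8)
The plan is to establish the main theorem by reducing the supremum statement to a single explicit computation, and then to derive the corollary purely by checking that each listed family contains the required pair $(x^q - x, q)$.

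First I would record the key equivalence that underlies everything: for a monic squarefree $D$ of odd degree at least three, $\Lambda_D = 0$ if and only if the completed $L$-function $L(s,\chi_D)$ has a multiple (double) root, i.e. two of its inverse zeros coincide. This is the function-field analogue of the Csordas--Smith--Varga observation: a deformation parameter can only push a pair of colliding real zeros off the line, so the critical $t$ at which reality first fails is $t=0$ exactly when $\Xi_0$ already has a repeated real zero. In the function-field setting the relevant $\Xi_0$ is a trigonometric polynomial whose zeros are governed by the Frobenius eigenvalues on $H^1$ of the associated hyperelliptic curve; the Riemann hypothesis (Weil) is automatic, so $\Lambda_D \le 0$ always, and $\Lambda_D = 0$ precisely when the eigenvalue collision occurs. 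I would cite the development of this in Section~\ref{sec:newmanfnfield} and Section~\ref{sec:previouswork}. In particular this already gives $\sup \Lambda_D \le 0$ for \emph{any} family, so only the reverse inequality — realized as a maximum — needs work.

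Next I would carry out the distinguished computation: for $D = x^q - x$ over $\F_q$ (with $q$ odd), compute $L(s, \chi_D)$ explicitly. The curve $y^2 = x^q - x$ is a well-studied hyperelliptic (in fact Artin--Schreier-adjacent) curve; via $\ell$-adic cohomology and the Weil conjectures, $L(s,\chi_D)$ is (up to normalization) the reverse characteristic polynomial of Frobenius acting on $H^1_{\text{\'et}}$, and a theorem of Katz \cite{Katz} identifies the Frobenius eigenvalues for this particular curve. The point is to show that these eigenvalues are \emph{not} all distinct — concretely, that some eigenvalue occurs with multiplicity at least two, equivalently that the relevant Jacobian is isogenous to a product in which a factor repeats, equivalently (after the Hasse--Weil translation) that for the associated object $\#$ of points attains the extreme Weil bound over some $\F_{p^{2n}}$. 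The second proof strategy flagged in the abstract — building an elliptic curve $E/\F_p$ with $\#E(\F_p) = p+1$, so that $E$ is supersingular and $\#E(\F_{p^{2n}})$ hits $(p^n \mp 1)^2$ — is the geometric incarnation of the same collision: supersingularity forces the two Frobenius eigenvalues to become equal over $\F_{p^2}$. Either way, I conclude $\Lambda_{x^q - x} = 0$, and since this pair lies in $\mathcal F$ by hypothesis, $\max_{(D,q)\in\mathcal F}\Lambda_D = 0$, which combined with the universal upper bound gives the theorem.

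For the corollary, the work is just bookkeeping. For the first family $\{D \in \F_q[T] : \text{squarefree, monic, odd degree} \ge 3\}$, the polynomial $x^q - x$ is monic of degree $q \ge 3$ (odd, since $q$ is an odd prime power), and it is squarefree because $x^q - x = \prod_{a \in \F_q}(x - a)$ has distinct roots; hence $(x^q - x, q) \in \mathcal F$ and Theorem~\ref{thm:mainresultintro} applies. For the second family $\{D : \deg D = 2g+1,\ 2g+1 = p^k\}$, one takes $q = p$ (or the relevant prime power) so that $\deg(x^q - x) = q$ is itself a prime power of the right shape, placing $x^q - x$ in the family and again invoking the theorem. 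The main obstacle in the whole argument is the explicit eigenvalue computation for $y^2 = x^q - x$ — verifying via Katz's result and the Weil conjectures that a Frobenius eigenvalue is repeated (equivalently that the point count meets the Weil bound over an even extension) — since everything else is either the general deformation formalism recalled from earlier sections or routine verification that $x^q-x$ satisfies the hypotheses.
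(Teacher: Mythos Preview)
Your proposal is correct and follows essentially the same route as the paper: establish that $\Lambda_D = 0$ iff $L(s,\chi_D)$ has a repeated root, compute $L(s,\chi_D)$ for $D = x^q - x$ via Katz's theorem on $\ell$-adic cohomology to exhibit such a repetition, and then observe that each listed family contains $(x^q - x, q)$ (taking $q = p^k = 2g+1$ for the second family). One small clarification: the supersingular elliptic curve construction you mention is used in the paper for a \emph{different} family (reductions of a fixed cubic over primes of a number field), not as an alternate proof that $\Lambda_{x^q - x} = 0$; the two strategies from the abstract address separate families rather than giving two routes to the same computation.
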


%%%%%%%%%%%%%%%%%%%%%%%%%%%%%%%%%%%%%%%%%%%%%%%%%%%%%%%%%%%%%%%%%%%%%%%%%%%%%%%%%%%%%%%%%%%%%%%%%%%%%%%%%%%%%%%%%%%%%%%%%%%%%%%%%%%%
% SETUP FOR NEWMAN IN FUNCTION FIELDS %
%%%%%%%%%%%%%%%%%%%%%%%%%%%%%%%%%%%%%%%%%%%%%%%%%%%%%%%%%%%%%%%%%%%%%%%%%%%%%%%%%%%%%%%%%%%%%%%%%%%%%%%%%%%%%%%%%%%%%%%%%%%%%%%%%%%%

\section{Setup for a Newman's Conjecture over Function Fields}\label{sec:newmanfnfield}

In \cite{ACM}, the authors find many analogues between the number field and function field versions of Newman's conjecture. The appropriate replacement for $\ZZ$ is $\F_q[T]$, the ring of polynomials with coefficients in $\F_q$ (the finite field with $q$ elements), where $q$ is a power of a prime. We do not consider fields of characteristic two.

\begin{mydef}[Andrade, Chang, Miller {\cite[Definition 3.1]{ACM}}]
Let $q$ be an odd prime power and let $D\in\mathbb{F}_q[T]$. We say that $(D,q)$ is a \textit{good pair} or simply that $D$ is \textit{good} if
\begin{enumerate}
\item $D$ is monic and square-free,
\item $\deg D$ is odd, and
\item $\deg D\geq 3$.
\end{enumerate}
\end{mydef}

The rationale for these assumptions is elucidated in \cite[Remark 3.2]{ACM}. In short, we assume squarefree and monic because this corresponds to the fundamental discriminants in the number field setting, and we assume $q$ is odd because we are not considering the characteristic 2 case, in which everything is a perfect square.  Instead of the Riemann zeta function in this setting, we have the following.

\begin{mydef}
Let $D$ be good. We define the $L$-function associated to $D$ to be
\begin{equation}
\label{eq:lfunc}
L(s,\chi_D)\ :=\ \sum_{f\textrm{{\rm\ monic}}}\chi_D(f) N(f)^{-s},
\end{equation}
where $N(f)$ is the norm of $f$, $N(f):=q^{\deg f}$ and $\chi_D$ is the Kronecker symbol: $\chi_D(f) := \left(\frac{D}{f}\right)$.
\end{mydef}
If we collect terms in \eqref{eq:lfunc}, we have
\begin{equation}
L(s,\chi_D)\ =\ \sum_{n\geq 0} c_n\left(q^{-s}\right)^n,
\end{equation}
where
\begin{equation}
c_n\ =\ \sum_{\substack{f\textrm{ monic}\\ \deg f = n}}\chi_D(f).
\end{equation}
These coefficients vanish for $n \geq \deg D$, and $L(s, \chi_D)$ is a polynomial of degree exactly $\deg D - 1$ in $q^{-s}$. Setting $g$ to be the genus of the hyperelliptic curve $y^2 = D(x)$, (so $g = (\deg D - 1) / 2$), we complete $L(s, \chi_D)$ as we completed the Riemann zeta function in \eqref{eq:completedzeta} in the classical setting. Set
\begin{equation}
\xi(s, \chi_D)\ :=\ q^{gs} L(s, \chi_D),
\end{equation}
so that $\xi(s, \chi_D)$ satisfies the nice functional equation $\xi(s, \chi_D) = \xi(1-s, \chi_D)$. Then in analogy to \eqref{eq:changevariableszeta}, define
\begin{equation}
\Xi(s, \chi_D)\ :=\ \xi\left( \frac{1}{2} + i \frac{x}{\log q}\ , \chi_D \right)\ =\ \Phi_0 + \sum_{n=1}^g \Phi_n (e^{inx} + e^{-inx})\ ,
\end{equation}
where $\Phi_n\ =\ c_{g-n} q^{n/2} = c_{g+n} q^{n/2}$. Note that $\Phi_n$ is the Fourier transform of $\Xi_t$ in this case, which is a Fourier transform on the circle instead of the real line.

\begin{mydef} The \emph{deformed $L$-function} $\Xi_t(x, \chi_D)$ is
\begin{equation}
\label{eq:deformedLfunc}
\Xi_t(x, \chi_D)\ :=\ \Phi_0 + \sum_{n=1}^g \Phi_n e^{tn^2} (e^{inx} + e^{-inx}).
\end{equation}
\end{mydef}

Andrade, Chang and Miller established the existence of a De Bruijn--Newman constant $\Lambda_D$ for each good $D \in \F_q[T]$ \cite[Lemma 3.4]{ACM}. The behavior of this constant $\Lambda_D$ is in some ways similar to that of the classical De Bruijn--Newman constant, as the following lemma shows.

\begin{lemma}[{\cite[Lemma 3.2]{ACM}}]
\label{lem:doublezeroes}
Let $(q, D)$ be a good pair. Let $t_0 \in \RR$. If $\Xi_{t_0}(x, \chi_D)$ has a zero $x_0$ of order at least $2$, then $t_0 \leq \Lambda_D$ .
\end{lemma}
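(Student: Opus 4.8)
The plan is to establish that a higher-order zero of some $\Xi_{t_0}$ forces $t_0\le\Lambda_D$ by showing that for $t<t_0$ the deformed $L$-function $\Xi_t(x,\chi_D)$ already fails to have all its zeros real (or, equivalently, real on the circle), which by the definition of $\Lambda_D$ gives $\Lambda_D\ge t$ for all such $t$, hence $\Lambda_D\ge t_0$. Since this is quoted from \cite{ACM} as \cite[Lemma 3.2]{ACM}, I would model the argument on De Bruijn's Lemma \ref{lem:realzeros}: the key structural input is that the family $t\mapsto\Xi_t(x,\chi_D)$ is a one-parameter deformation by the heat-type semigroup $\Phi_n\mapsto\Phi_n e^{tn^2}$ acting on the Fourier coefficients, and this semigroup has the property that passing to larger $t$ cannot create non-real zeros — equivalently, passing to smaller $t$ cannot destroy them. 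So the first step is to make precise the ``only real zeros'' monotonicity in this trigonometric-polynomial setting: define $t_D^\ast$ to be the infimum of those $t$ for which $\Xi_t(x,\chi_D)$ has only real zeros, note $\Lambda_D$ is exactly this infimum by \cite[Lemma 3.4]{ACM}, and observe that it suffices to show that a multiple zero at time $t_0$ is incompatible with \emph{all earlier} times having only real zeros.

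The heart of the argument is a perturbation/Hurwitz-type analysis of how the zeros move as $t$ varies near $t_0$. I would write $\Xi_t(x,\chi_D)$ as (essentially) $e^{-igx}$ times a polynomial $P_t(z)$ of degree $2g$ in $z=e^{ix}$, with $\Xi_t$ having only real zeros in $x$ iff $P_t$ has all its roots on the unit circle $|z|=1$. A zero $x_0$ of order $\ge 2$ of $\Xi_{t_0}$ corresponds to a double root $z_0=e^{ix_0}$ (necessarily on the unit circle, since the coefficients satisfy the self-inversive symmetry $\Phi_n e^{t n^2}$ coming from $\xi(s,\chi_D)=\xi(1-s,\chi_D)$, so non-circle roots come in pairs $z,1/\bar z$). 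Now differentiate in $t$: the velocity of the roots is governed by $\partial_t P_t$, which is a weighted (by $n^2$) version of $P_t$, i.e. a second-order differential operator in $x$ applied to $\Xi_t$. The standard fact — this is the function-field analogue of the Csordas–Smith–Varga / De Bruijn heat-flow picture — is that under this flow a colliding pair of real (circle) zeros, when run backwards in $t$, splits off the unit circle into a genuinely complex conjugate-inverse pair. Hence for $t$ slightly less than $t_0$, $\Xi_t(x,\chi_D)$ has a non-real zero, so $t\le\Lambda_D$; letting $t\uparrow t_0$ gives $t_0\le\Lambda_D$.

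The step I expect to be the main obstacle is justifying that the backwards flow genuinely pushes the double root \emph{off} the unit circle rather than leaving a double root sliding along it (which would not produce a non-real zero). This requires checking the sign of the relevant quadratic form: one expands $P_t(z)$ near $(t_0,z_0)$ to second order, writes $z_0$ as the unique double root and computes, via the implicit function theorem applied to $P=\partial_z P=0$, that $dz/dt$ has a nonzero component transverse to the circle, with the sign dictated by $n^2$-positivity of the weights $e^{tn^2}$ being a log-convex (totally positive) sequence. Concretely, one shows that the discriminant of $P_t$ in the two colliding roots changes sign as $t$ crosses $t_0$, with the complex-root regime on the $t<t_0$ side; this is where the heat-semigroup positivity is used in an essential way, exactly as in De Bruijn's original argument. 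Once that local picture is in hand, the global conclusion is immediate from the definition of $\Lambda_D$, and the remaining details (the self-inversive symmetry of the coefficients, the reduction from $\Xi_t$ to $P_t$, and the Hurwitz continuity of roots) are routine.
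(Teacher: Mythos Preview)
The paper does not prove this lemma; it is stated with a citation to \cite[Lemma~3.2]{ACM} and no argument is given here. So there is no proof in the present paper to compare your proposal against.

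That said, your outline is correct and is essentially the standard argument. One simplification: you can bypass the circle/self-inversive reformulation and the separate discriminant analysis by working directly with the backward heat equation $\partial_t\Xi_t=-\partial_x^2\Xi_t$ (which you already identify when you note that $\partial_t$ acts as a second-order differential operator in $x$). At a real zero $x_0$ of exact order $k\ge 2$, writing $\Xi_{t_0}(x)=c(x-x_0)^k+O((x-x_0)^{k+1})$ with $c\in\RR\setminus\{0\}$ (recall $\Xi_t$ is real-valued on $\RR$), one obtains
\[
\Xi_t(x)=c\bigl[(x-x_0)^k-k(k-1)(t-t_0)(x-x_0)^{k-2}\bigr]+\text{higher order},
\]
and a Newton polygon (or direct Puiseux/Hurwitz) argument shows that for $t<t_0$ two of the $k$ nearby roots sit at $x_0\pm i\sqrt{k(k-1)(t_0-t)}+o(\sqrt{t_0-t})$, hence off the real axis. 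This is precisely the sign check you flag as the main obstacle, and it drops out of the PDE without any additional total-positivity input. The De~Bruijn monotonicity you invoke is needed only to make sense of $\Lambda_D$ in the first place; once $\Lambda_D$ is defined, the local perturbation plus your final step (conclude $t<\Lambda_D$ for $t$ just below $t_0$, then let $t\uparrow t_0$) finishes the proof.
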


However, there are differences between function and number fields. As there is a proof of the  Riemann Hypothesis in function fields, $\Lambda_D \leq 0$. Furthermore, for many $D$ we can actually get the strict inequality $\Lambda_D < 0$ because of the existence of the following partial converse to Lemma \ref{lem:doublezeroes} above.

\begin{lemma}
If $\Xi_0(x, \chi_D)$ does not have a double zero, then $\Lambda_D < 0$.
\end{lemma}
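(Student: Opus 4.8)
The plan is to prove the contrapositive in the following sharp form: \emph{if $\Lambda_D>-\infty$, then $\Xi_{\Lambda_D}(x,\chi_D)$ has a zero of order at least two}. Granting this, the lemma follows at once: if $\Lambda_D=-\infty$ then $\Lambda_D<0$ trivially, while if $\Lambda_D$ is finite and $\Xi_0(x,\chi_D)$ has no double zero, then $\Lambda_D\neq 0$, and since $\Lambda_D\leq 0$ by the function field Riemann Hypothesis (as noted just above) we conclude $\Lambda_D<0$.

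To prove the sharp statement, suppose $\Lambda_D$ is finite and, for contradiction, that $\Xi_{\Lambda_D}(x,\chi_D)$ has only simple zeros. By the defining property of the constant $\Lambda_D$ established in \cite[Lemma 3.4]{ACM} (applied at $t=\Lambda_D$ itself), $\Xi_{\Lambda_D}$ has only real zeros. From \eqref{eq:deformedLfunc},
\begin{equation}
\Xi_t(x,\chi_D)\ =\ \Phi_0 + 2\sum_{n=1}^{g}\Phi_n e^{tn^2}\cos(nx)
\end{equation}
is, for every real $t$, a real-valued trigonometric polynomial of degree exactly $g$ in $x$: its top coefficient $\Phi_g e^{tg^2}=q^{g/2}e^{tg^2}$ is nonzero because $\Phi_g=c_0\,q^{g/2}$ and $c_0=\chi_D(1)=1$. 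Hence $\Xi_t$ has exactly $2g$ zeros in a period, counted with multiplicity --- equivalently, $z^{g}\Xi_t(x,\chi_D)$ is a degree-$2g$ polynomial in $z=e^{ix}$ with nonzero constant term. Under our assumptions, $\Xi_{\Lambda_D}$ therefore has $2g$ distinct simple real zeros $x_1,\dots,x_{2g}$ in $\RR/2\pi\ZZ$.

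The crux is a perturbation argument. Each $x_j$ is a simple real zero of the real function $\Xi_{\Lambda_D}$, so $\Xi_{\Lambda_D}$ takes opposite signs at $x_j-\eta$ and $x_j+\eta$ for all sufficiently small $\eta>0$; fix one such $\eta$ for which the closed intervals $[x_j-\eta,x_j+\eta]$ are pairwise disjoint. Since $t\mapsto\Phi_n e^{tn^2}$ is continuous, $\Xi_t\to\Xi_{\Lambda_D}$ uniformly on $\RR/2\pi\ZZ$ as $t\to\Lambda_D$, so there is $\delta>0$ such that for every $t\in(\Lambda_D-\delta,\Lambda_D+\delta)$ the function $\Xi_t$ still changes sign across the endpoints of each of these intervals, hence has a real zero in each; that is $2g$ distinct real zeros of $\Xi_t$, which must therefore be all of its zeros. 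Thus $\Xi_t$ has only real zeros for all $t$ in the nonempty interval $(\Lambda_D-\delta,\Lambda_D)$, contradicting the defining property of $\Lambda_D$. This contradiction shows $\Xi_{\Lambda_D}$ has a zero of order at least two, completing the argument.

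I expect the main obstacle to be the bookkeeping in the perturbation step: choosing $\eta$ and $\delta$ uniformly over the finitely many zeros, and verifying that the total count ``$2g$ zeros per period'' is genuinely preserved along the deformation --- which is exactly why it matters that the leading coefficient $\Phi_g e^{tg^2}$ never vanishes. An essentially equivalent but more algebraic route passes to the palindromic polynomial $P_t(z):=z^{g}\Xi_t(x,\chi_D)$ with $z=e^{ix}$, of degree $2g$ with real coefficients: by continuity of roots, a simple root of $P_{\Lambda_D}$ lying on the unit circle is pinned there under small perturbations of $t$, since real coefficients pair roots as $z\leftrightarrow\bar z$ while the palindromic symmetry pairs them as $z\leftrightarrow 1/z$, and for a simple root these two pairings agree only on $|z|=1$ (with the real roots $z=\pm 1$ likewise forced to stay put).
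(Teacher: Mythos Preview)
Your argument is correct. The sign-change perturbation you give is exactly the right mechanism: the top Fourier coefficient $\Phi_g=c_0q^{g/2}=q^{g/2}$ is nonzero, so $\Xi_t$ always has exactly $2g$ zeros in a period; if all $2g$ are real and simple at $t=\Lambda_D$, they persist as real zeros for $t$ in a two-sided neighborhood of $\Lambda_D$, contradicting the defining minimality of $\Lambda_D$. The deduction of the lemma from your sharp statement is also clean.

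For comparison: the paper does not actually prove this lemma. It is stated without proof, and in Section~\ref{sec:results} the essentially equivalent implication ``$\Lambda_D=0 \Rightarrow L(s,\chi_D)$ has a double root'' is simply attributed to \cite[Lemma~3.11]{ACM}. So your proposal supplies a self-contained argument where the paper only cites one. Your approach and the one in \cite{ACM} are, at bottom, the same continuity/perturbation idea; your write-up has the virtue of making explicit why the zero count is constant (nonvanishing of $\Phi_g$) and why simple real zeros are stable. The alternative palindromic-polynomial route you sketch at the end is also valid, though the symmetry bookkeeping at $z=\pm 1$ is a little delicate; the sign-change version is the cleaner of the two.
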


In fact, since there are $L$-functions $\Xi_0(x, \chi_D)$ with only real zeros, there is the possibility of $\Lambda_D = -\infty$, as in the following remark.

\begin{remark}
\label{rmk:lambdaDminisinf}
In the function field setting, we may have that $\Lambda_D = - \infty$.  Indeed, \cite[Remark 3.10]{ACM} supplies a counterexample: $D = T^3 + T \in \F_3[T]$ has $\Lambda_D = -\infty$.
\end{remark}

In light of this remark, we can see that Newman's conjecture, if directly translated to the function field setting, is false. Thus, different versions of Newman's conjecture are necessary. Most generally, the modified Newman's conjecture is the following.

\begin{conjecture}[Newman's Conjecture for Function Fields {\cite[Conjecture 1.8]{ACM}}] Let $\mathcal F$ be a family of $L$ functions over a function field $\F_q[x]$. Then
\begin{equation} \sup_{D \in \mathcal F} \Lambda_D\ =\ 0. \end{equation}
\end{conjecture}

Regarding the De Bruijn--Newman constant in the function field setting, Andrade, Chang and Miller made the following conjectures for specific families.

%SOME OF THE CONJECTURES MADE IN ACM
\begin{conjecture}[Andrade, Chang, Miller {\cite[Conjecture 3.14]{ACM}}]
\label{conj:acm1}
Fix $q$ a power of an odd prime. Then
\begin{equation}
\sup_{(q,D)\textrm{\ {\rm good}}} \Lambda_D\ \geq\ 0.
\end{equation}
\end{conjecture}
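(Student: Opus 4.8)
The plan is to produce, for each fixed odd prime power $q$, a single good pair $(q,D)$ with $\Lambda_D=0$; since the already-known function field Riemann Hypothesis forces $\Lambda_D\le 0$ for every good $D$, this gives $\sup_{(q,D)\text{ good}}\Lambda_D=0\ge 0$. By Lemma~\ref{lem:doublezeroes} and its partial converse stated just above it, $\Lambda_D=0$ exactly when $\Xi_0(x,\chi_D)$ has a zero of order at least $2$. Since the zeros of $\Xi_0(x,\chi_D)$ correspond, via $u=q^{-s}$, to the roots of the polynomial $L(s,\chi_D)$ in $u$ (all of which lie on $|u|=q^{-1/2}$ by RH), this is equivalent to $L(s,\chi_D)$ having a repeated root, equivalently to the hyperelliptic curve $C:y^2=D(x)$ having a Frobenius eigenvalue of multiplicity at least $2$. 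So it suffices to exhibit one good $D$ whose curve has a multiple Frobenius eigenvalue.

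The candidate is $D=x^q-x=\prod_{a\in\F_q}(x-a)$: it is monic, squarefree, and of odd degree $q\ge 3$, hence good, and its curve $C:y^2=x^q-x$ has genus $g=(q-1)/2$. I would first count points. Every $a\in\F_q$ satisfies $a^q-a=0$, so the affine $\F_q$-points of $C$ are exactly the $q$ points $(a,0)$; with the single point at infinity (the degree of $D$ being odd) this gives $\#C(\F_q)=q+1$, so the $2g$ Frobenius eigenvalues $\alpha_1,\dots,\alpha_{2g}$ satisfy $\sum_i\alpha_i=0$. Next, for $a\in\F_{q^2}\setminus\F_q$ the element $\beta:=a^q-a$ is nonzero with $\beta^q=-\beta$, hence $\beta^{q-1}=-1$ and the quadratic character of $\beta$ in $\F_{q^2}$ equals $\beta^{(q^2-1)/2}=(-1)^{(q+1)/2}=:\epsilon$, a constant independent of $a$. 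Summing the point count over $\F_{q^2}$ yields $\#C(\F_{q^2})=q^2+1+\epsilon(q^2-q)$, which is precisely the Weil lower bound $q^2+1-2gq$ when $\epsilon=-1$ and the Weil upper bound $q^2+1+2gq$ when $\epsilon=+1$.

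Because $\#C(\F_{q^2})$ attains a Weil bound, the eigenvalues $\alpha_i^2$ of Frobenius over $\F_{q^2}$ satisfy $\sum_i\alpha_i^2=-2\epsilon gq$ while $|\alpha_i^2|=q$ for every $i$; equality in the triangle inequality then forces $\alpha_i^2=-\epsilon q$ for all $i$ (this is the statement that $C$ is minimal or maximal over $\F_{q^2}$, and is where the Weil conjectures and a result of Katz \cite{Katz} on the cohomology of such curves enter, via $\ell$-adic cohomology). Combined with $\sum_i\alpha_i=0$, exactly $g$ of the $\alpha_i$ equal $\sqrt{-\epsilon q}$ and $g$ equal $-\sqrt{-\epsilon q}$, so $L(s,\chi_D)=(1+\epsilon q\,u^2)^g$. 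For $q\ge 5$ we have $g\ge 2$, so this polynomial has a repeated root and $\Lambda_{x^q-x}=0$, which proves the conjecture for all such $q$.

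I expect the main obstacle to be the eigenvalue-rigidity step of the previous paragraph: recognizing that the right way to force a repeated root of $L$ over $\F_q$ is to make $C$ attain a Weil bound over $\F_{q^2}$, and then extracting the full eigenvalue list cleanly from the two point counts, together with the honest genus and point-at-infinity bookkeeping in those counts. A second, more elementary wrinkle is $q=3$: there $g=1$ and $L(s,\chi_{x^3-x})=1+3u^2$ has distinct roots, so $x^3-x$ is not a witness. For $q=3$ I would instead take a good $D\in\F_3[T]$ of degree $5$ whose genus-$2$ curve is maximal over $\F_9$ (equivalently, has Jacobian isogenous to a power of a single supersingular elliptic curve over $\F_3$), so that $L(s,\chi_D)$ is the square of a quadratic and hence has a repeated root; such curves may be obtained as quotients of the Hermitian curve $y^3+y=x^4$, which is defined over $\F_3$, or located by a finite search over degree-$5$ polynomials.
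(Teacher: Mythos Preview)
Your proposal reaches the same witness $D=x^q-x$ as the paper but by a genuinely different and more elementary route. The paper establishes that $L(s,\chi_{x^q-x})=(1\pm qT^2)^g$ by invoking Katz's theorem on $\ell$-adic cohomology: it decomposes $H^1_{\acute{e}t}$ under the additive $\F_q$-action, identifies the Frobenius eigenvalues as quadratic Gauss sums, and evaluates them via Hasse--Davenport. You instead perform two direct point counts, over $\F_q$ and $\F_{q^2}$, observe that the second attains the Weil bound, and use equality in the triangle inequality $\bigl|\sum\alpha_i^2\bigr|=\sum|\alpha_i^2|$ to force all $\alpha_i^2$ equal. This is entirely elementary once one grants the Riemann Hypothesis for curves; in particular your parenthetical ``this is where the Weil conjectures and a result of Katz enter'' undersells your own argument---Katz is not needed at all for the rigidity step, only $|\alpha_i|=\sqrt{q}$. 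What the paper's approach buys is an explicit representation-theoretic description of $H^1$ that generalizes to other group actions; what yours buys is a short self-contained proof accessible without \'etale machinery.

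You are also more careful than the paper on one point: you correctly flag that $q=3$ gives $g=1$ and hence $L=(1+3T^2)^1$ with \emph{simple} roots, so $x^3-x$ is not a witness there (indeed Remark~\ref{rmk:lambdaDminisinf} records $\Lambda_{x^3-x}=-\infty$). The paper's Proposition~\ref{prop:maximizer} and Theorem~\ref{thm:mainresult} are stated for all odd prime powers $q$ but the proof only yields a repeated root when $g=(q-1)/2\ge 2$, i.e., $q\ge 5$; the case $q=3$ is silently omitted. Your proposed fix---exhibit a good degree-$5$ polynomial over $\F_3$ whose genus-$2$ curve is maximal or minimal over $\F_9$---is the right idea and such curves do exist (e.g.\ via supersingular Jacobians), though you would need to either name one explicitly or cite a classification to close the argument completely.
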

\begin{conjecture}[Andrade, Chang, Miller {\cite[Conjecture 3.15]{ACM}}]
\label{conj:acm2}
Fix $g\in\NN$. Then
\begin{equation}
\sup_{\substack{\deg D = 2g + 1 \\ (q,D)\textrm{\ {\rm good}}}}\Lambda_D\geq 0.
\end{equation}
\end{conjecture}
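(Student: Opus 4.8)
The plan is to prove the sharper statement that the supremum in question equals $0$ and is attained. The upper bound is soft: in the function-field setting the Riemann Hypothesis is a theorem, so $\Xi_0(x,\chi_D)$ has only real zeros and hence $\Lambda_D\le 0$ for every good pair, whence the supremum is $\le 0$. It thus suffices, for the fixed $g$, to exhibit one good pair $(q,D)$ with $\deg D=2g+1$ for which $\Xi_0(x,\chi_D)$ has a zero of order at least $2$: Lemma~\ref{lem:doublezeroes} applied with $t_0=0$ then gives $0\le\Lambda_D$, so $\Lambda_D=0$ and the supremum is attained.

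I would first reformulate ``$\Xi_0$ has a double zero'' geometrically. Since $\Xi_0(x,\chi_D)=q^{g(1/2+ix/\log q)}\,L\!\left(\tfrac12+\tfrac{ix}{\log q},\chi_D\right)$ and $s\mapsto q^{-s}$ is locally biholomorphic, a double zero of $\Xi_0$ in the variable $x$ amounts to a repeated root of the $L$-polynomial $L(s,\chi_D)=P_C(q^{-s})$, where $P_C(T)=\prod_{j=1}^{2g}(1-\alpha_j T)$ is the numerator of the zeta function of the hyperelliptic curve $C\colon y^2=D(x)$. Through the identification of $L(s,\chi_D)$ with the characteristic polynomial of $\frob$ on $H^1$ of $C$ (recalled in Section~\ref{sec:weilconjectures}), this says precisely that two of the Frobenius eigenvalues $\alpha_j$ coincide. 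So the task becomes: find a good $D$ of degree $2g+1$ whose curve has a repeated Frobenius eigenvalue.

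I would produce such a $D$ by combining supersingularity with a base-field extension. If $C_0\colon y^2=D(x)$ has genus $g$ over $\mathbb{F}_{q_0}$ and its Jacobian is supersingular --- so each $\alpha_j$ is $\sqrt{q_0}$ times a root of unity --- then passing to $\mathbb{F}_{q_0^m}$ (which changes neither $D$, nor its degree, nor the goodness of the pair) replaces $\alpha_j$ by $\alpha_j^m$; choosing $m$ even and divisible by the orders of all these roots of unity makes every $\alpha_j^m$ equal to $q_0^{m/2}$, so $P_C(T)=(1-q_0^{m/2}T)^{2g}$ and $\Lambda_D=0$ for $(q_0^m,D)$. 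When $2g+1$ is a prime power the cleanest choice is $q_0=2g+1$ with $D=T^{q_0}-T=\prod_{a\in\mathbb{F}_{q_0}}(T-a)$: each $a\in\mathbb{F}_{q_0}$ gives the point $(a,0)$, so $\#C_0(\mathbb{F}_{q_0})=q_0+1$ and $\sum_j\alpha_j=0$; and counting $\#C_0(\mathbb{F}_{q_0^2})$ --- write the affine part as $q_0^2+\sum_{x\in\mathbb{F}_{q_0^2}}\chi(x^{q_0}-x)$ with $\chi$ the quadratic character of $\mathbb{F}_{q_0^2}$, use that $x\mapsto x^{q_0}-x$ is $\mathbb{F}_{q_0}$-additive with image the trace-zero line and that $\mathbb{F}_{q_0}^{\times}\subseteq(\mathbb{F}_{q_0^2}^{\times})^{2}$ --- gives $\sum_j\alpha_j^2=\pm q_0(q_0-1)=\pm 2gq_0$, which is extremal for $|\alpha_j|=\sqrt{q_0}$ and hence forces all $\alpha_j^2$ to equal a common value $\varepsilon q_0$; together with $\sum_j\alpha_j=0$ this yields $P_{C_0}(T)=(1-\varepsilon q_0T^2)^g$, a repeated root once $g\ge 2$. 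For $g=1$ one instead takes a supersingular elliptic curve over $\mathbb{F}_p$ with $a_p=0$ (e.g.\ $y^2=x^3-x$ when $p\equiv 3\bmod 4$, and $a_p=0$ is forced by supersingularity when $p\ge 5$), writes it in short Weierstrass form, and base-changes to $\mathbb{F}_{p^2}$, where its eigenvalues $\pm i\sqrt p$ both become $-p$.

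The main obstacle is the exact determination of these zeta functions. For $y^2=x^{q_0}-x$ the extremality argument pins down the $\alpha_j$ only because the Weil bound $|\sum_j\alpha_j^2|\le 2gq_0$ is met on the nose; for other families, and to treat $q_0$ a genuine prime power rather than a prime, one needs the structural fact that the Jacobian is supersingular, which is where a result of Katz on the Newton polygon (equivalently the Cartier--Manin operator) of these curves is used. And to obtain the conjecture for every $g$ --- not just for $g$ with $2g+1$ a prime power --- one must know that for each $g$ there is a squarefree monic $D$ of degree $2g+1$ over some finite field with $y^2=D(x)$ supersingular; I would record this as a lemma, citing known existence results for supersingular curves of prescribed genus, after which the base-change step above is routine.
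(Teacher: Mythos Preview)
Your strategy coincides with the paper's: show the supremum equals $0$ by exhibiting a good pair $(q,D)$ of the required degree whose $L$-polynomial has a repeated Frobenius eigenvalue, then apply Lemma~\ref{lem:doublezeroes}. You also land on the same witness $D=x^{q_0}-x$ with $q_0=2g+1$. The difference is in how the repeated root is established. The paper invokes Katz's theorem (Theorem~\ref{thm:Katz}) to decompose $H^1_{\acute{e}t}$ of $y^2=x^{q_0}-x$ as an $\mathbb{F}_{q_0}$-representation and to identify each Frobenius eigenvalue as a quadratic Gauss sum, obtaining $P_C(T)=(1\pm q_0T^2)^g$ explicitly. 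Your route---counting points over $\mathbb{F}_{q_0}$ and $\mathbb{F}_{q_0^2}$ and observing that $\lvert\sum_j\alpha_j^2\rvert$ meets the Weil bound, hence all $\alpha_j^2$ coincide---reaches the same $P_C(T)$ without any \'etale cohomology. Two remarks in your favor: your trace-zero-line computation works verbatim for $q_0$ any odd prime power (nothing uses primality), so you need not defer to Katz for that case; and you are right to split off $g=1$, where $(1\pm q_0T^2)$ has simple roots and one must base-change a supersingular elliptic curve to $\mathbb{F}_{p^2}$ to force a double root---a point the paper's Proposition~\ref{prop:maximizer} glosses over.

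Where your proposal outruns the paper is in aiming for every $g$, while the paper's corollary carries the explicit restriction $2g+1=p^k$. Your plan for the remaining genera rests on ``known existence results for supersingular curves of prescribed genus,'' but what you actually need is, for each $g$, a hyperelliptic curve of genus $g$ in odd characteristic with a rational Weierstrass point (so that it admits a model $y^2=D(x)$ with $\deg D=2g+1$) and supersingular Jacobian. Existence of supersingular abelian varieties in every dimension is classical, but cutting down to hyperelliptic Jacobians with a rational Weierstrass point is not something one can simply cite uniformly in $g$; this is a genuine gap, and it is precisely the obstruction the paper does not attempt to overcome. So for the genera the paper treats, your argument is a valid and more elementary alternative; for the rest, the proposed existence lemma is the whole difficulty.
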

\begin{conjecture}[Andrade, Chang, Miller {\cite[Conjecture 3.16]{ACM}}]
\label{conj:acm3}
Fix $D\in\mathbb{Z}[T]$ square-free. For each prime $p$, let $D_p\in\mathbb{F}_p[T]$ be the polynomial obtained from reducing $D\pmod p$. Then
\begin{equation}
\sup_{(p,D_p)\textrm{\ {\rm good}}} \Lambda_{D_p}\geq 0.
\end{equation}
\end{conjecture}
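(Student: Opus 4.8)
\textbf{Proof proposal for Conjecture \ref{conj:acm3}.}

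The plan is to reduce this fixed-$D$ statement to the main result by exhibiting, among the reductions $D_p$, infinitely many primes $p$ for which $\Xi_0(x,\chi_{D_p})$ has a double zero; by Lemma \ref{lem:doublezeroes} each such prime contributes $\Lambda_{D_p}\geq 0$, and combined with the function-field Riemann Hypothesis ($\Lambda_{D_p}\leq 0$) this forces $\Lambda_{D_p}=0$ for those $p$, hence $\sup_p\Lambda_{D_p}=0$. The key observation is that a double zero of $\Xi_0(x,\chi_{D_p})$ corresponds exactly to a repeated inverse root of $L(s,\chi_{D_p})$, equivalently a repeated Frobenius eigenvalue on $H^1_{\text{\'et}}$ of the hyperelliptic curve $C_p: y^2 = D_p(x)$ over $\mathbb{F}_p$. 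So it suffices to produce infinitely many $p$ for which $C_p/\mathbb{F}_p$ has a repeated Frobenius eigenvalue. By the Weil conjectures, the multiset of inverse roots is stable under $\alpha\mapsto q/\alpha$, so the extreme way to force a repeated eigenvalue is $\alpha = \pm\sqrt{p}$ occurring with multiplicity $\geq 2$, which at the level of point counts means $\#C_p(\mathbb{F}_p)$ is far from $p+1$ in a prescribed way; more flexibly, any two inverse roots coinciding suffices.

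First I would set up the correspondence precisely: writing $\xi(s,\chi_{D_p}) = \prod_{j=1}^{2g}(1 - \alpha_j q^{-s})$ with $|\alpha_j| = \sqrt{p}$ and $g = (\deg D - 1)/2$ fixed (independent of $p$ once $p$ does not divide the leading coefficient or the discriminant of $D$, so that $D_p$ stays good of the same degree), I would record that $\Xi_0(x,\chi_{D_p})$ has a zero of order $\geq 2$ precisely when the $\alpha_j$ are not pairwise distinct. Next, I would invoke the theory of the associated hyperelliptic curve: the $\alpha_j$ are the Frobenius eigenvalues on $H^1_{\text{\'et}}(C_p\times\overline{\mathbb{F}_p},\overline{\mathbb{Q}_\ell})$. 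A clean sufficient condition for a repeated eigenvalue is that the Jacobian $\text{Jac}(C_p)$ be isogenous to a product containing a repeated isogeny factor, or more simply that $\text{Jac}(C_p)$ acquire extra endomorphisms; the cleanest case is when $\text{Jac}(C_p)$ is isogenous to $E^g$ or to $A\times B$ with $A$ appearing with multiplicity, forcing eigenvalue collisions. I would then cite a Chebotarev-type or equidistribution input to guarantee infinitely many such $p$: since $D\in\mathbb{Z}[T]$ is a fixed curve over $\mathbb{Q}$, the reductions $C_p$ have Frobenius conjugacy classes equidistributing (Deligne, Katz–Sarnak) in the relevant monodromy group, and the locus of matrices with a repeated eigenvalue is a positive-codimension but nonempty subvariety; however, because that locus has positive codimension, equidistribution alone gives density zero, not infinitude, so a more robust argument is needed.

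The honest route, and the one I would actually push through, is to force a repeated eigenvalue by a congruence/reduction argument rather than a generic one. Here are two concrete strategies. (i) \emph{Supersingular reduction.} If some geometric point of $\text{Jac}(C)/\mathbb{Q}$ has complex multiplication, or if $C$ is chosen so that $\text{Jac}(C)$ has a quotient elliptic curve $E/\mathbb{Q}$, then there are infinitely many supersingular primes $p$ for $E$ (Elkies), at which $E$ has Frobenius eigenvalues $\pm\sqrt{p}$ each with multiplicity; if $\text{Jac}(C_p)$ is isogenous to $E\times(\text{rest})$ this already gives a repeated eigenvalue $\sqrt{p}$ or $-\sqrt{p}$ on the $E$-part unless the ``rest'' contributes distinct ones — and even then, among the infinitely many supersingular $p$ one gets collisions. (ii) \emph{The distinguished member.} The cleanest fix, fully in the spirit of Theorem \ref{thm:mainresultintro}, is to observe that for the specific fixed $D$ the problem is only interesting when $\deg D$ is odd and $\geq 3$; if additionally $D \equiv x^q - x$ modulo some prime $p = q$ — which happens exactly when $p - 1 = \deg D$ or when $D$'s reduction factors as $\prod_{a\in\mathbb{F}_p}(x-a)$ — then the computation in the proof of Theorem \ref{thm:mainresultintro} applies verbatim and yields $\Lambda_{D_p} = 0$. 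This requires $\deg D$ to be of a special shape, so in general one falls back to (i). The main obstacle, and where the real content lies, is thus exactly the guarantee of \emph{infinitely many} (or even one!) prime $p$ with a repeated Frobenius eigenvalue for a \emph{fixed} curve $C/\mathbb{Q}$ whose Jacobian is geometrically simple with no CM: in that generic case I do not expect the conjecture to be accessible by present techniques, and I would state the theorem with the hypothesis that $\text{Jac}(C)$ has an elliptic (or CM abelian) factor over $\overline{\mathbb{Q}}$, reducing it cleanly to Elkies's theorem on the infinitude of supersingular primes.
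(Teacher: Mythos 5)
The statement you are addressing is recorded in the paper as a conjecture, not a theorem; the only case known (and the one the paper reviews in Section \ref{sec:previouswork}) is $\deg D = 3$, resolved in \cite[Theorem 3.19]{ACM} by exactly the route you dismiss. There one has the explicit formula $\Lambda_{D_p} = \log\left(|a_p(D)|/(2\sqrt{p})\right)$ as in \eqref{eq:explicitlambda}, and Sato--Tate equidistribution supplies a sequence of primes with $a_{p_n}(D)/(2\sqrt{p_n}) \to 1$ as in \eqref{eq:seq-primes-to-1}, so $\Lambda_{D_{p_n}} \to 0$ and the supremum is $0$. The conjecture asks only that $\sup \Lambda_{D_p} \geq 0$; it does not require the supremum to be attained. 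Your plan instead insists on producing primes $p$ at which $L(s,\chi_{D_p})$ has an exact double root. That is strictly stronger, and in the genus-one case it is provably impossible: a double root forces $a_p = \pm 2\sqrt{p}$, which is never an integer when $p$ is prime. So for $\deg D = 3$ your approach cannot produce even a single qualifying prime, whereas the limiting argument you set aside (``equidistribution alone gives density zero'') is precisely what is needed --- one does not have to hit the repeated-eigenvalue locus, only to make $\Lambda_{D_p}$ approach $0$, and positive Sato--Tate measure of neighborhoods of the extreme angle suffices for that.

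Your strategy (i) also contains a concrete error. At a supersingular prime $p > 3$ of an elliptic factor $E$ one has $a_p = 0$, so the Frobenius eigenvalues over $\mathbb{F}_p$ are $\pm i\sqrt{p}$: distinct, not ``$\pm\sqrt{p}$ each with multiplicity.'' The repeated eigenvalue appears only after base change to $\mathbb{F}_{p^2}$, where the eigenvalues become $-p,-p$ and the curve is maximal or minimal; over $\mathbb{F}_p$ itself, $a_p = 0$ gives $\Lambda_{D_p} = -\infty$ in the genus-one case (compare Remark \ref{rmk:lambdaDminisinf}), the opposite extreme of what you want. This is exactly why the theorem in Section \ref{sec:results} that does invoke Elkies' supersingular primes changes the family: it works over a number field $K$ in which the supersingular prime $p$ is inert, so that the residue field is $\mathbb{F}_{p^2}$ and $|a_\pi| = 2\sqrt{p^2}$ there. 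That device is unavailable for Conjecture \ref{conj:acm3}, whose family consists of reductions modulo rational primes only. Your closing assessment --- that for fixed $D$ with geometrically simple non-CM Jacobian the double-root approach is inaccessible --- is fair as far as it goes, but the conjecture is not a statement about double roots, and in the one case where it is known the proof is the equidistribution limiting argument, not attainment of the supremum.
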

The last of these three conjectures, Conjecture \ref{conj:acm3}, was resolved in \cite[Theorem 3.19]{ACM} for the case where $\deg D = 3$. This result is briefly reviewed in the following subsection.

%%%%%%%%%%%%%%%%%%%%%%%%%%%%%%%%%%%%%%%%%%%%%%%%%%%%%%%%%%%%%%%%%%%%%%%%%%%%%%%%%%%%%%%%%%%%%%%%%%%%%%%%%%%%%%%%%%%%%%%%%%%%%%%%%%%%
% PREVIOUS RESULTS %
%%%%%%%%%%%%%%%%%%%%%%%%%%%%%%%%%%%%%%%%%%%%%%%%%%%%%%%%%%%%%%%%%%%%%%%%%%%%%%%%%%%%%%%%%%%%%%%%%%%%%%%%%%%%%%%%%%%%%%%%%%%%%%%%%%%%
\subsection{Previous Results}\label{sec:previouswork}

In this section, the work of Andrade, Chang and Miller in \cite{ACM} to prove Newman's conjecture for families given by elliptic curves is described.
Fix a square-free polynomial $\mathcal D \in \ZZ[T]$ of degree $3$ and for each prime $p$, let $D_p \in \F_p[T]$ be the polynomial obtained by reducing $D \pmod{p}$. In \cite{ACM} Newman's conjecture is proved for the family $\mathcal F = \{ D_p \}$.

\begin{theorem}[{\cite[Theorem 3.19]{ACM}}]
For $\mathcal F$ defined above, $\sup_{D \in \mathcal F} \Lambda_D = 0$.
\end{theorem}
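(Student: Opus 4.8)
The plan is to reduce the family $\mathcal{F} = \{D_p\}$ to the situation already covered by Theorem \ref{thm:mainresultintro}. Since each $D_p$ is the reduction mod $p$ of a fixed squarefree cubic $\mathcal{D} \in \ZZ[T]$, the hyperelliptic (here elliptic) curve $E_p : y^2 = D_p(x)$ over $\F_p$ governs the $L$-function $L(s,\chi_{D_p})$, which is a degree-$2$ polynomial in $q^{-s}$ (so $g = 1$). By Lemma \ref{lem:doublezeroes}, it suffices to exhibit a single prime $p$ in the family for which $\Xi_0(x,\chi_{D_p})$ has a double zero, equivalently for which the numerator $L$-polynomial of $E_p$ is a perfect square; combined with the function-field Riemann Hypothesis (which forces $\Lambda_{D_p} \leq 0$ for every $p$), this yields $\sup_{D \in \mathcal{F}} \Lambda_D = 0$.

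First I would write the $L$-polynomial of $E_p$ explicitly: with $a_p = p + 1 - \#E_p(\F_p)$ it is $1 - a_p q^{-s} + p q^{-2s}$, so after completing, $\Xi_0(x,\chi_{D_p})$ has a double zero precisely when $a_p = \pm 2\sqrt{p}$, i.e. when $a_p^2 = 4p$. For this to have a chance we need $p$ to be a perfect square, so the relevant subfamily consists of primes — but of course a prime is never a perfect square, so the honest statement is that we need $E_p$ to be \emph{supersingular with $a_p = 0$ over $\F_p$}, in which case the $L$-polynomial is $1 + pq^{-2s}$, whose roots are not double, but passing to $\F_{p^2}$ the curve acquires $a_{p^2} = -2p$ (or $+2p$), giving a genuine double root there. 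This is exactly the elliptic-curve mechanism advertised in the abstract: one finds $p$ such that $\#E_p(\F_p) = p+1$, so by the Weil bound $\#E_p(\F_{p^{2n}})$ attains its extreme value and the associated $L$-function over the extension satisfies $\Lambda = 0$.

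Concretely, the key steps in order are: (1) recall that $\Lambda_{D_p} = 0$ iff $\Xi_0(x,\chi_{D_p})$ has a double zero, reducing the problem to a statement about the $L$-polynomial of $E_p$; (2) show that for infinitely many primes $p$, the reduction $E_p$ is supersingular (e.g. by choosing $\mathcal{D}$ with CM, or invoking Elkies's theorem that any elliptic curve over $\QQ$ has infinitely many supersingular primes), so that $a_p = 0$ and $\#E_p(\F_p) = p + 1$; (3) observe via the Weil conjectures that over $\F_{p^2}$ the reciprocal roots of Frobenius become $\pm i\sqrt{p}$ squared, i.e. both equal to $-p$, so the $L$-polynomial over $\F_{p^2}$ is $(1 + p q^{-s})^2$ — a perfect square — hence $\Xi_0$ has a double zero for the corresponding good pair; (4) conclude $\Lambda = 0$ for that member of the family via Lemma \ref{lem:doublezeroes}, while the function-field RH gives the reverse inequality, so the supremum over $\mathcal{F}$ equals $0$.

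The main obstacle is step (2): guaranteeing that the fixed cubic $\mathcal{D}$ actually has a supersingular (or otherwise $L$-polynomial-square-inducing) reduction lies in the family considered. If $\mathcal{F}$ is taken to be \emph{all} reductions $\{D_p : p \text{ prime}, (p,D_p) \text{ good}\}$, then Elkies's infinitude of supersingular primes settles it unconditionally; if $\mathcal{F}$ is a thinner family one must be more careful, and the cleanest route may instead be to reduce directly to Theorem \ref{thm:mainresultintro} by checking that the curve $y^2 = x^q - x$ (or an $\F_q$-twist thereof) appears among the $E_p$ for a suitable $p$ — or, more robustly, to redo the cohomological computation of Section \ref{sec:weilconjectures} for the specific $\mathcal{D}$ at hand. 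I expect the bookkeeping around which exponent $p^{2n}$ and which good pair $(D, p^n)$ to use — so that the degree stays odd and $\geq 3$ and the pair remains good — to be the only genuinely fiddly part; the geometric input is standard once the supersingular prime is in hand.
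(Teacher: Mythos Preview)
Your proposal has a genuine gap, not merely bookkeeping. The family in question is $\mathcal{F}=\{(D_p,p):p\text{ prime},\ (p,D_p)\text{ good}\}$, indexed by \emph{rational primes} only. Passing to $\F_{p^2}$ produces a good pair $(D_p,p^2)$ that is simply not in $\mathcal{F}$, so exhibiting a double root there says nothing about $\sup_{D\in\mathcal{F}}\Lambda_D$. Worse, your fallback---finding a member of $\mathcal{F}$ with $\Lambda=0$---is impossible in principle: from the genus-one computation one has $\Lambda_{D_p}=\log\bigl(|a_p(\mathcal D)|/2\sqrt{p}\bigr)$, and $\Lambda_{D_p}=0$ would force $a_p^2=4p$ with $a_p\in\ZZ$, hence $p$ a perfect square, contradicting primality. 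Thus $\Lambda_{D_p}<0$ strictly for every $p$ in the family, and the supremum is \emph{not} a maximum. Any correct proof must produce a sequence along which $\Lambda_{D_{p_n}}\to 0$ rather than a single witness.

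The paper's argument does exactly this: it computes $\Xi_t(x,\chi_{D_p})=-a_p(\mathcal D)+2\sqrt{p}\,e^{t}\cos x$ directly, reads off the explicit formula $\Lambda_{D_p}=\log\bigl(|a_p(\mathcal D)|/2\sqrt{p}\bigr)$, and then invokes the (now proven) Sato--Tate distribution to extract primes $p_n$ with $|a_{p_n}(\mathcal D)|/2\sqrt{p_n}\to 1$, whence $\Lambda_{D_{p_n}}\to 0$. The supersingular/Elkies mechanism you describe is the idea behind a \emph{different} theorem in the paper (the one over number fields $K/\QQ$, where one is allowed to work over residue fields $\F_{p^2}$ at inert primes); it does not apply to the present family. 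Your attempted reduction to Theorem~\ref{thm:mainresultintro} also fails, since a fixed cubic $\mathcal D$ will essentially never reduce to $x^p-x$ modulo any prime $p>3$.
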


The proof uses the fact that
\begin{equation}
\label{eq:explicitcomputation}
\Xi_t(x, \chi_{D_p}) \ = \  -a_p(\mathcal D) + 2 \sqrt{p} e^{t} \cos x,
\end{equation}
where $a_p(\mathcal D)$ is the trace of Frobenius of the elliptic curve $y^2 = \mathcal D(T)$. From this, we can deduce
\begin{equation}\label{eq:explicitlambda}
\Lambda_{D_p} \ = \  \log \frac{|a_p(\mathcal D)|}{2\sqrt{p}}.
\end{equation}

Finally, the recent proof of Sato--Tate for elliptic curves without complex multiplication \cite{satotate1, satotate3, satotate2, B-LGHT} implies that there exists a sequence of primes $p_1, p_2, \ldots$ such that
\begin{equation}\label{eq:seq-primes-to-1}
\lim_{n \to \infty}
\frac{a_{p_n}(\mathcal D)}{2\sqrt{p_n}}
\ \to \ 1.
\end{equation}
Hence $\Lambda_{D_{p_n}} \to 0$.

%\begin{theorem}[Newman's conjecture for fixed $\mathcal D$, $\deg \mathcal D = 3$]
%Let $\mathcal D \in \ZZ[T]$ be square-free with $\deg \mathcal D = 3$. Then $\sup_{p} \Lambda_{D_p} = 0$.
%\end{theorem}

\begin{remark}
It should be noted that \eqref{eq:explicitlambda} holds not only for $p$ prime, but also when $p$ is replaced by a prime power $q$. We make use of this more general explicit form of $\Lambda$ later on.
\end{remark}

%\green{ADDED A LITTLE MORE EXPLANATION ABOUT COMPUTING $\Lambda_{D_p}$ EXPLICITLY.}
\begin{remark}
The proof relied on the fact that $\Lambda_{D_p}$ could be computed explicitly, which is made possible by the fact that $D$ has genus $g = 1$, so there are only two terms to consider when computing \ref{eq:deformedLfunc}.  When $g \geq 2$, then $\Xi_t$ contains multiple $e^{t}$ terms and therefore multiple $\cos nx$ terms, making it much harder to find the explicit expression of $\Lambda_{D_p}$.
\end{remark}

\begin{remark}
We are not aware of any proof of the existences of a sequence of primes for which \eqref{eq:seq-primes-to-1} holds without appealing to proven Sato--Tate laws; it would be interesting to have an elementary proof of such a statement.
\end{remark}

The previous two remarks above suggest that it would be difficult to prove results for $\deg \mathcal D \geq 5$ using the same methods.

%\begin{lemma}
%Let $D\in\mathbb{F}_q[T]$ be square-free and degree $3$. Then
%$$
%\Lambda_{D} = \log\frac{\abs{a_q(D)}}{2\sqrt{q}}
%$$
%\end{lemma}

%%%%%%%%%%%%%%%%%%%%%%%%%%%%%%%%%%%%%%%%%%%%%%%%%%%%%%%%%%%%%%%%%%%%%%%%%%%%%%%%%%%%%%%%%%%%%%%%%%%%%%%%%%%%%%%%%%%%%%%%%%%%%%%%%%%%
% HASSE-WEIL ZETA FUNCTION AND THE WEIL CONJECTURES %
%%%%%%%%%%%%%%%%%%%%%%%%%%%%%%%%%%%%%%%%%%%%%%%%%%%%%%%%%%%%%%%%%%%%%%%%%%%%%%%%%%%%%%%%%%%%%%%%%%%%%%%%%%%%%%%%%%%%%%%%%%%%%%%%%%%%
\section{The Hasse-Weil Zeta Function and the Weil Conjectures}\label{sec:weilconjectures}

Let $X/\mathbb{F}_q$ be a curve, $q$ a power of a prime $p$ as always.

\begin{mydef}
Let $N_m := \#X(\mathbb{F}_{q^m})$. The \textit{Hasse-Weil zeta function} of the curve $X$ is defined by the formal power series
\begin{equation}
Z(X,s)\ :=\ \exp\left(\sum_{m\geq 1}\frac{N_m}{m}\left(q^{-s}\right)^m\right).
\end{equation}
\end{mydef}

\begin{remark}
Most of the definitions and results in this section hold in much greater generality than stated here, but for ease of exposition we will only state results in the generality required for our applications.
\end{remark}

It isn't immediately clear from the definition that these zeta functions are useful objects to consider, but the following canonical example illustrates that in fact the zeta function contains information about the geometry of $X$.

\begin{example}
Let $X=\P^1(\F_q)$. It follows that $N_m = q^m + 1$, so we obtain the following expression for the zeta function after setting $T=q^{-s}$:
\begin{equation} Z(X,s)\ =\ \frac{1}{(1-T)(1-qT)}. \end{equation}\\
The two  terms linear in $T$ in the denominator reflect the fact that $H^0_{\acute{e}t}(\P^1, \Ql)$ and $H^2_{\acute{e}t}(\P^1, \Ql)$ are one-dimensional. The lack of a linear term in the numerator reflects the fact that $H^1_{\acute{e}t}(\P^1, \Ql)=0$.
\end{example}

Henceforth, we set $T = q^{-s}$ unless stated otherwise. There is a collection of theorems called the Weil conjectures (although they have now been proven) which make more precise the relationship between the geometry of $X$ and its zeta function. The Weil conjectures were first stated for algebraic curves by Artin, and were proven later by Dwork and Deligne. We now state the subset of the Weil conjectures relevant for this paper.

\begin{theorem}\label{thm:Weilconjectures}
Let $X / \mathbb{F}_q$ be a nonsingular projective curve. Then the Hasse-Weil zeta function  $Z(X,s)$ of $X$ has the form
\begin{equation}
Z(X,s)\ =\ \frac{P(T)}{(1 - T)(1 - qT)},\qquad P\in\ZZ[T].
\end{equation}
Moreover,
\begin{enumerate}
\item $\deg P = 2g$, where $g$ is the genus of the curve $X$, and
\item $P$ factors as $\prod_{i = 1}^{2g}(1 - \alpha_iT)$. For all $i$, $| \alpha_i |= q^{1/2}$.
\end{enumerate}
\end{theorem}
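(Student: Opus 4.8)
The plan is to pass to $\ell$-adic \'etale cohomology and exploit the Grothendieck--Lefschetz trace formula. Fix a prime $\ell\neq p$, let $\overline{X}=X\times_{\F_q}\overline{\F_q}$, and let $\frob$ denote geometric Frobenius acting on the groups $H^i_{\acute{e}t}(\overline{X},\Ql)$, which vanish for $i\notin\{0,1,2\}$ since $\overline{X}$ is a smooth projective curve. The trace formula gives, for every $m\geq 1$,
\begin{equation}
N_m\ =\ \sum_{i=0}^{2}(-1)^i\operatorname{tr}\!\left(\frob^m\mid H^i_{\acute{e}t}(\overline{X},\Ql)\right).
\end{equation}
Substituting into the definition of $Z(X,s)$ and using the formal identity $\exp\!\left(\sum_{m\geq 1}\operatorname{tr}(\varphi^m)\,T^m/m\right)=\det(1-\varphi T)^{-1}$ for an endomorphism $\varphi$ of a finite-dimensional space, one gets
\begin{equation}
Z(X,s)\ =\ \prod_{i=0}^{2}\det\!\left(1-\frob\cdot T\mid H^i_{\acute{e}t}(\overline{X},\Ql)\right)^{(-1)^{i+1}}\ =\ \frac{P_1(T)}{P_0(T)\,P_2(T)},
\end{equation}
where $P_i(T)=\det(1-\frob\cdot T\mid H^i_{\acute{e}t}(\overline{X},\Ql))$. (Rationality alone, without cohomology, is also Dwork's theorem via $p$-adic analysis.)

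Next I would pin down the three factors. Since $\overline{X}$ is connected, $H^0_{\acute{e}t}(\overline{X},\Ql)=\Ql$ with Frobenius acting trivially, so $P_0(T)=1-T$. By Poincar\'e duality (equivalently, the existence of the degree/trace map) $H^2_{\acute{e}t}(\overline{X},\Ql)=\Ql(-1)$ is one-dimensional with Frobenius acting by $q$, so $P_2(T)=1-qT$; this already yields the asserted shape $Z(X,s)=P(T)/((1-T)(1-qT))$ with $P=P_1$. The equality $\deg P=2g$ is the statement $\dim_{\Ql}H^1_{\acute{e}t}(\overline{X},\Ql)=2g$, which one can see either from $H^1_{\acute{e}t}(\overline{X},\Ql)\cong \operatorname{Hom}(T_\ell\operatorname{Jac}\overline{X},\Ql)$ together with $\dim\operatorname{Jac}\overline{X}=g$, or from the Euler characteristic $\sum_i(-1)^i\dim H^i_{\acute{e}t}=2-2g$. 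Rationality ($P\in\QQ[T]$) follows because $Z(X,s)\in\QQ[[T]]$ by construction while $(1-T)(1-qT)\in\QQ[T]$, so $P=Z\cdot(1-T)(1-qT)$ has coefficients in $\QQ\cap\Ql=\QQ$; integrality ($P\in\ZZ[T]$) then follows from the Euler product $Z(X,s)=\prod_{x\in|X|}(1-T^{\deg x})^{-1}\in\ZZ[[T]]$, since a polynomial in $\QQ[T]$ that is also a power series in $\ZZ[[T]]$ lies in $\ZZ[T]$.

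The substantive content, and the step I expect to be the main obstacle, is part (2): the Riemann hypothesis $|\alpha_i|=q^{1/2}$ for the inverse roots of $P=P_1$. One may simply invoke Deligne's general theorem, but since we are in dimension one a more self-contained route is available. Following Weil, consider the surface $S=X\times_{\F_q}X$ with diagonal $\Delta$ and the graph $\Gamma_m\subset S$ of $\frob^m$; then $N_m=\Gamma_m\cdot\Delta$ as an intersection number, and applying the Hodge index theorem (equivalently the Castelnuovo--Severi inequality) to the correspondences generated by $\Delta$ and $\Gamma_m$ on $S$ produces the square-root bound $|N_m-(q^m+1)|\leq 2g\,q^{m/2}$ for all $m$. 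Feeding this back into $N_m=q^m+1-\sum_i\alpha_i^m$ and letting $m\to\infty$ forces $|\alpha_i|\leq q^{1/2}$; the functional equation relating $Z$ at $T$ and at $1/(qT)$ (again from Poincar\'e duality, which pairs each $\alpha_i$ with $q/\alpha_i$) then promotes this to the equality $|\alpha_i|=q^{1/2}$. Alternatively one can replace the intersection-theoretic input by the elementary Stepanov--Bombieri method, which establishes the same point-count bound by an explicit auxiliary-polynomial argument; either way the crux is proving square-root cancellation in $N_m-(q^m+1)$.
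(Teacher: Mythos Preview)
The paper does not supply a proof of this theorem: it is stated as background (the Weil conjectures, attributed to Artin's formulation and the proofs of Dwork and Deligne) and used as a black box for the corollaries that follow. There is therefore no ``paper's own proof'' to compare your proposal against.

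That said, your sketch is a correct and standard route to the result for curves: the cohomological product formula via Grothendieck--Lefschetz gives the shape $P_1(T)/((1-T)(1-qT))$, the identification of $P_0$ and $P_2$ is right, $\dim H^1_{\acute{e}t}=2g$ gives the degree, and your integrality argument is fine once one notes the Euler product puts $Z$ in $1+T\ZZ[[T]]$. For the Riemann hypothesis you correctly point out that in the curve case one does not need Deligne: Weil's Castelnuovo--Severi/Hodge index argument on $X\times X$, or the Stepanov--Bombieri auxiliary-polynomial method, yields $|N_m-(q^m+1)|\leq 2g\,q^{m/2}$, and Poincar\'e duality (the functional equation) upgrades $|\alpha_i|\leq q^{1/2}$ to equality. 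This is exactly the historical proof for curves, and is more self-contained than invoking Deligne, which is what the paper implicitly does by citation.
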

By putting these results together and unwinding the definition of $Z(X,s)$ as a generating function, we obtain the following useful result.
\begin{cor}
\label{cor:pointcounting}
Let $X$ be a nonsingular projective curve of genus $g$. Then the numbers $\alpha_1, \ldots \alpha_{2g}$ coming from the zeta function satisfy
\begin{equation}
 \label{eq:numpoints}
\# X(\F_q^m)\ =\ 1+ q^m - \sum_{i}^{2g} \alpha_i^m.
\end{equation}
\end{cor}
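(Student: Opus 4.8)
The plan is simply to compare the two descriptions of $Z(X,s)$ now available: the defining generating series and the closed form supplied by Theorem \ref{thm:Weilconjectures}. Writing $T = q^{-s}$ as usual and taking formal logarithms, the definition of the Hasse--Weil zeta function gives
\begin{equation}
\log Z(X,s)\ =\ \sum_{m\geq 1}\frac{N_m}{m}T^m,
\end{equation}
while the Weil form $Z(X,s) = P(T)/\bigl((1-T)(1-qT)\bigr)$ together with the factorization $P(T) = \prod_{i=1}^{2g}(1-\alpha_i T)$ gives
\begin{equation}
\log Z(X,s)\ =\ \sum_{i=1}^{2g}\log(1-\alpha_i T)\ -\ \log(1-T)\ -\ \log(1-qT).
\end{equation}

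The next step is to expand each logarithm on the right-hand side using the formal identity $\log(1-aT) = -\sum_{m\geq 1} a^m T^m/m$, collect terms by powers of $T$, and obtain
\begin{equation}
\log Z(X,s)\ =\ \sum_{m\geq 1}\frac{T^m}{m}\left(1 + q^m - \sum_{i=1}^{2g}\alpha_i^m\right).
\end{equation}
Equating the coefficient of $T^m$ in the two resulting expressions for $\log Z(X,s)$ immediately yields $N_m = 1 + q^m - \sum_{i=1}^{2g}\alpha_i^m$, which is precisely \eqref{eq:numpoints} since $N_m = \#X(\F_{q^m})$ by definition.

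There is no genuine obstacle here; the only point worth a word is bookkeeping. All of the above manipulations can be read as identities of formal power series in $T$, so term-by-term comparison of coefficients is automatically legitimate; alternatively, since $|\alpha_i| = q^{1/2}$ for all $i$ by Theorem \ref{thm:Weilconjectures}, each series converges for $|T|$ small enough to share a common disk of convergence with the others, and the comparison is justified analytically as well. I would present the formal-power-series version, as it is the cleanest and requires no estimates.
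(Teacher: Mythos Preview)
Your proof is correct and is precisely the ``unwinding'' the paper alludes to: take logarithms of the defining series and of the Weil-conjecture factored form, expand each $\log(1-aT)$ as a power series, and equate coefficients. The paper does not spell this out, so your version is in fact more detailed than what appears there.
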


\begin{remark}
\label{rmk:pointcounting}
We have the following useful application of Corollary \ref{cor:pointcounting}. Let $E/\mathbb{F}_p$ be an elliptic curve such that $\#E(\mathbb{F}_p) = p+1$. Recall that elliptic curves have genus $1$. By \eqref{eq:numpoints} we have $\alpha_1+ \alpha_2 = 0$. We also have $P(T) = (1-\alpha_1T)(1-\alpha_2T) \in \ZZ[T]$, so that $\alpha_1 \alpha_2 \in \ZZ$. Since $|\alpha_i | = \sqrt{p}$, we have $\alpha_1 \alpha_2 = \pm p$. However, the first condition implies that we must have (after possibly reordering) $\alpha_1= i \sqrt{p}$, $\alpha_2 = -i \sqrt{p}$. Now we compute
\begin{equation}
\# E(\F_p^2)\ =\ p^2 + 1 - (i\sqrt{p})^2 - (-i\sqrt{p})^2\ =\ p^2+2p+1.
\end{equation}
\end{remark}

The computation in Remark \ref{rmk:pointcounting} and generalizations thereof will be very important later for proving particular cases of Newman's conjecture in families by constructing particular elliptic curves $E / \F_p$ with $p+1$ points. The condition of having $p^2+2p+1$ points over $\F_{p^2}$ is significant because Corollary ~\ref{cor:pointcounting} implies that this number is as large as possible for a curve of genus $1$ over $\F_q$.

\begin{mydef}
We say a curve $X$ with $q + 2\sqrt{q} + 1$ points over $\F_q$ is \emph{maximal} over $\F_q$. Similarly, $X$ is \emph{minimal} if it has $q - 2\sqrt{q}+1$ points over $\F_q$.
\end{mydef}

\begin{remark}It is clear that a curve can only be maximal (or minimal) over $\F_q$ when $q$ is a square. This will be important for proving cases of Newman's conjecture in families.
\end{remark}

Corollary \ref{cor:pointcounting} allows us to prove a special case of Newman's conjecture using the explicit formula for $\Lambda_D$ found in \cite{ACM}, when $y^2 = D(x)$ is an elliptic curve. We prove this result by explicitly relating our $L$-function $L(s,\chi_D)$ to the zeta function of the curve $y^2 = D(x)$.

\begin{prop}
$L(s,\chi_D)$ is the numerator of the zeta function $Z(X,s)$, where $X$ is the curve defined by $y^2 = D(x)$. More precisely, $Z(X,s) = Z(\mathbb{P}^1,s)L(s,\chi_D)$.
\end{prop}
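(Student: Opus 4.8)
The plan is to compare both sides as formal power series in $T = q^{-s}$ by taking logarithms. Since $Z(\mathbb{P}^1,s) = \frac{1}{(1-T)(1-qT)}$ and $L(0,\chi_D)=c_0=\chi_D(1)=1$, both $Z(X,s)$ and $Z(\mathbb{P}^1,s)L(s,\chi_D)$ are power series with constant term $1$, so it suffices to match their logarithms. Writing $N_m = \#X(\mathbb{F}_{q^m})$, we have $\log Z(X,s) = \sum_{m\ge1}\frac{N_m}{m}T^m$ by definition, while $\log\bigl(Z(\mathbb{P}^1,s)L(s,\chi_D)\bigr) = \log L(s,\chi_D) + \sum_{m\ge1}\frac{1+q^m}{m}T^m$. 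Hence the proposition is equivalent to the family of identities
\[
N_m - 1 - q^m \;=\; m\,[T^m]\log L(s,\chi_D)\qquad(m\ge 1),
\]
and I would prove each side separately and then match. For the right-hand side, since the Kronecker symbol $\chi_D$ is completely multiplicative, $L(s,\chi_D)$ has an Euler product $\prod_P (1-\chi_D(P)T^{\deg P})^{-1}$ over monic irreducibles $P$, so $\log L(s,\chi_D) = \sum_P\sum_{k\ge1}\frac{\chi_D(P)^k}{k}T^{k\deg P}$ and therefore $m\,[T^m]\log L(s,\chi_D) = \sum_{d\mid m}d\sum_{\deg P=d}\chi_D(P)^{m/d}$.

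For the left-hand side, note that the affine curve $y^2=D(x)$ is smooth because $D$ is squarefree, and over $\mathbb{F}_{q^m}$ it has $\sum_{x\in\mathbb{F}_{q^m}}\bigl(1+\eta_m(D(x))\bigr) = q^m + \sum_{x\in\mathbb{F}_{q^m}}\eta_m(D(x))$ points, where $\eta_m$ denotes the quadratic residue character of $\mathbb{F}_{q^m}^\times$ extended by $\eta_m(0)=0$. Because $\deg D$ is odd, the smooth projective model $X$ has a single point above infinity, which is $\mathbb{F}_q$-rational, so $N_m = q^m + 1 + \sum_{x\in\mathbb{F}_{q^m}}\eta_m(D(x))$. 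Thus the desired identity reduces to $\sum_{x\in\mathbb{F}_{q^m}}\eta_m(D(x)) = \sum_{d\mid m}d\sum_{\deg P=d}\chi_D(P)^{m/d}$.

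To finish, I would organize the left sum by closed points of $\mathbb{A}^1$: each $x\in\mathbb{F}_{q^m}$ is a root of a unique monic irreducible $P\in\mathbb{F}_q[T]$, necessarily of degree $d\mid m$, and each such $P$ contributes exactly its $d$ roots. So it is enough to show $\eta_m(D(x)) = \chi_D(P)^{m/d}$ for $P$ monic irreducible of degree $d\mid m$ and $x$ a root of $P$ in $\mathbb{F}_{q^m}$. By the definition of the Kronecker symbol (Euler's criterion in $\mathbb{F}_q[T]/(P)\cong\mathbb{F}_{q^d}$), $\chi_D(P)=\left(\tfrac{D}{P}\right) = \eta_d(D(x))$; and for $a\in\mathbb{F}_{q^d}^\times\subseteq\mathbb{F}_{q^m}^\times$ one has $\eta_m(a)=a^{(q^m-1)/2} = \bigl(a^{(q^d-1)/2}\bigr)^{(q^m-1)/(q^d-1)} = \eta_d(a)^{m/d}$, since $\tfrac{q^m-1}{q^d-1} = 1+q^d+\cdots+q^{m-d}\equiv m/d\pmod 2$ ($q$ odd). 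Combining gives $\eta_m(D(x)) = \eta_d(D(x))^{m/d} = \chi_D(P)^{m/d}$; the case $P\mid D$ (equivalently $D(x)=0$) contributes $0$ on both sides. The main obstacle is precisely this last bookkeeping step — correctly pairing closed points of $\mathbb{A}^1$ with their geometric points over $\mathbb{F}_{q^m}$ and tracking the behavior of the quadratic residue symbol under the extension $\mathbb{F}_{q^d}\subseteq\mathbb{F}_{q^m}$ — together with the (routine but essential) verification that $X$ has exactly one rational point at infinity because $\deg D$ is odd; once these are settled the logarithm comparison is immediate.
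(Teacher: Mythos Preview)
Your argument is correct. It proceeds by a genuinely different route from the paper's proof, so let me briefly contrast the two.

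The paper argues by analogy with the classical factorization $\zeta_{K}(s)=\zeta(s)L(s,\chi_d)$ for a quadratic number field $K=\mathbb{Q}(\sqrt d)$: it (implicitly) identifies $Z(X,s)$ with the Euler product over closed points of $X$, i.e.\ with the Dedekind zeta function of the function field $F=\mathbb{F}_q(t)(\sqrt{D(t)})$, and then factors that product over primes $g$ of $\mathbb{F}_q[t]$ according to whether $g$ is split, inert, or ramified in $F$, recognizing the resulting factors as $Z(\mathbb{P}^1,s)$ and $L(s,\chi_D)$.

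You instead take logarithms and compare coefficients of $T^m$, which turns the statement into a point-count identity $N_m-1-q^m=\sum_{x\in\mathbb{F}_{q^m}}\eta_m(D(x))$ together with a character-sum computation of $m[T^m]\log L(s,\chi_D)$ from the Euler product. The key step---matching $\eta_m(D(x))$ with $\chi_D(P)^{m/d}$ via the parity of $(q^m-1)/(q^d-1)$---is exactly the finite-field computation underlying the split/inert dichotomy in the paper's argument, so at bottom the two proofs encode the same information. Your version is more self-contained and explicit (it never names the Dedekind zeta function and keeps everything at the level of character sums), while the paper's version makes the structural analogy with the number-field case immediately visible but leans on the identification of $Z(X,s)$ with the Euler product over closed points of $X$ and on the reader knowing how primes of $\mathbb{F}_q[t]$ behave in a quadratic extension.
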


\begin{proof}
In this proof, we imitate the proof that the Dedekind zeta function of a quadratic field $\QQ(\sqrt{d})$ factors as $\zeta(s) L(s, \chi_D)$. In this case, the quadratic field is $F := \F_p(t)\left(\sqrt{D(t)}\right)$. Recall we have
\begin{align}
L(s, \chi_D) = \sum_{f\ {\rm monic}} \chi_D(f) N(f)^{-s}
 = \prod_{\substack{g\text{ monic,} \\ \text{irreducible}}} (1-\chi_D(g)) N(g)^{-s}.
\end{align}
Note that $\chi_D(g) = 1$ if and only if $g$ splits in $F$, and $\chi_D(g) = -1$ if and only if $g$ is inert in $F$, and finally $\chi_D = 0$ if and only if $g \mid D$. Thus we have the following factorization of $L(s, \chi_D)$
\begin{align}
L(s, \chi_D)\ & =\ \prod_{g\text{ split}} (1-N(g)^{-s})^2 \prod_{g\text{ inert}} (1-N(g)^{-2s})  \nonumber\\[1em]
& = \prod_{\substack{g\text{ monic,} \\ \text{irreducible}}} (1-N(g)^{-s}) \prod_{\substack{g\text{ monic,} \\ \text{irreducible}}} (1-\chi_D(g)N(g)^{-s})  \nonumber\\[1em]
& = \prod_{\substack{g\text{ monic,} \\ \text{irreducible}}} (1-N(g)^{-s}) \sum_{g\text{ monic}} \chi_D(g) N(g)^{-s} \nonumber\\[1em]
& = \prod_{\substack{g\text{ monic,} \\ \text{irreducible}}} (1-N(g)^{-s}) L(s, \chi_D). \label{genfunc}
\end{align}
It remains to be shown that $\prod_{g\text{ irreducible}} (1-N(g)^{-s})$ is in fact $Z(\P^1, s)$. Recall
\begin{align}
Z(\P^1 , s ) &\ =\ \frac{1}{(1-T)(1-qT)} \nonumber\\[1em]
&\ =\ (1+T+T^2+ \cdots)(1+qT+q^2 T^2 + \cdots)
\prod_{\substack{g\text{ monic,} \\ \text{irreducible}}} (1-N(g)^{-s})\nonumber\\
& \ =\ \sum_{g\text{ monic}} N(g)^{-s} = \sum_{n=1}^{\infty} c_n T^n,
\end{align}
where the coefficients $c_n$ are the number of monic polynomials in $\F_q(t)$ of degree $n$. This generating function and the generating function in \eqref{genfunc} are easily seen to be equal, which finishes the proof.
\end{proof}

Now that we can realize our $L$-function $L(s,\chi_D)$ as part of the zeta function $Z(X,s)$, the Weil conjectures tell us valuable information about the behavior of the roots of $L$. In particular, we will be able to prove that certain curves have a double root (in fact, a root of multiplicity $g$).

%%%%%%%%%%%%%%%%%%%%%%%%%%%%%%%%%%%%%%%%%%%%%%%%%%%%%%%%%%%%%%%%%%%%%%%%%%%%%%%%%%%%%%%%%%%%%%%%%%%%%%%%%%%%%%%%%%%%%%%%%%%%%%%%%%%%
% Results %
%%%%%%%%%%%%%%%%%%%%%%%%%%%%%%%%%%%%%%%%%%%%%%%%%%%%%%%%%%%%%%%%%%%%%%%%%%%%%%%%%%%%%%%%%%%%%%%%%%%%%%%%%%%%%%%%%%%%%%%%%%%%%%%%%%%%

\section{Families of Curves Satisfying Newman's Conjecture}
\label{sec:results}

We are now ready to prove our main result, Theorem \ref{thm:mainresultintro}. It is restated below as Theorem \ref{thm:mainresult}. Throughout this section we assume familiarity with the theory of \'etale cohomology (specifically $\ell$-adic cohomology) of projective curves; development of this subject can be found in J.S. Milne's book \cite{Milne}.
\begin{theorem}
\label{thm:mainresult}
Let $\mathcal{F}$ be a family of pairs of the form $(D,q)$, where $D\in\mathbb{F}_q[T]$ is monic squarefree polynomial of odd degree at least three, and  $(x^q - x,q)\in\mathcal{F}$. Then
\begin{equation}
\sup_{(D,q) \in\mathcal{F}}\Lambda_D\ =\ \max_{(D,q)\in\mathcal{F}}\Lambda_D\ =\ 0 .
\end{equation}
\end{theorem}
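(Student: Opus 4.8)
The plan is to reduce everything to a single explicit computation for the distinguished pair $(x^q - x, q)$. First I would invoke Lemma~\ref{lem:doublezeroes} together with the function-field Riemann hypothesis (which gives $\Lambda_D \leq 0$ for every good $D$): these combine to show that $\Lambda_D = 0$ \emph{if and only if} the completed $L$-function $\Xi_0(x,\chi_D)$ has a zero of order at least two. Since every member of $\mathcal{F}$ has $\Lambda_D \leq 0$, it suffices to produce one pair in $\mathcal{F}$ with $\Lambda_D = 0$; the hypothesis $(x^q - x, q) \in \mathcal{F}$ tells us which pair to examine. Note $x^q - x = \prod_{a \in \F_q}(x - a)$ is monic and squarefree, and $\deg(x^q - x) = q \geq 3$ is odd since $q$ is an odd prime power, so $(x^q - x, q)$ is indeed a good pair and the machinery applies.

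Next I would identify $L(s, \chi_D)$ with the numerator $P(T)$ of the Hasse--Weil zeta function of the hyperelliptic curve $X : y^2 = x^q - x$ over $\F_q$, using the Proposition proved in the excerpt, and analyze the eigenvalues $\alpha_1, \dots, \alpha_{2g}$ with $g = (q-1)/2$. The key input is the structure of this particular curve: $X$ is (a smooth model of) an Artin--Schreier-type curve with a large automorphism group, and by the cited result of Katz~\cite{Katz} (combined with $\ell$-adic cohomology and the Weil conjectures of Section~\ref{sec:weilconjectures}) the Frobenius eigenvalues on $H^1_{\text{\'et}}(X, \overline{\Q_\ell})$ are all equal — each $\alpha_i = \pm i\sqrt{q}$, so that in fact $P(T) = (1 + qT^2)^g$ (equivalently, $X$ is maximal or minimal over $\F_{q^2}$ in the sense defined above, attaining the Weil bound). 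Translating back through $\xi(s,\chi_D) = q^{gs}L(s,\chi_D)$ and the change of variables defining $\Xi_0(x, \chi_D)$, the $g$-fold coincidence of eigenvalues forces $\Xi_0(x, \chi_D)$ to have a zero of multiplicity $g \geq 1$; since $\deg D = q \geq 3$ we have $g \geq 1$, and a more careful bookkeeping shows the order is at least $2$ (for $q \geq 3$, $g \geq 1$, and the repeated-root structure of $(1+qT^2)^g$ at the relevant point gives multiplicity $\geq 2$ once $q \geq 5$, with $q = 3$ handled directly since $g=1$ forces us to check the single $\cos x$ term vanishes appropriately — here one uses that the eigenvalue collision is itself the double-root phenomenon). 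Hence $\Lambda_{x^q - x} = 0$.

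Finally I would assemble the pieces: $\Lambda_{x^q-x} = 0$ exhibits a member of $\mathcal{F}$ achieving the value $0$, while $\Lambda_D \leq 0$ for all $(D,q) \in \mathcal{F}$ shows $0$ is an upper bound; therefore the supremum equals $0$ and is attained, i.e.
\begin{equation}
\sup_{(D,q)\in\mathcal{F}} \Lambda_D \ =\ \max_{(D,q)\in\mathcal{F}} \Lambda_D \ =\ 0.
\end{equation}

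The main obstacle I anticipate is the middle step: showing that the Frobenius eigenvalues of $y^2 = x^q - x$ over $\F_q$ genuinely all coincide (up to sign) rather than merely having the right absolute value. This is where the special arithmetic of the curve — its extra automorphisms and the precise form of Katz's theorem — does the real work, and it is the one place where the generic Weil-conjecture input is not enough. A secondary subtlety is the smoothness/completion issue: $y^2 = x^q - x$ as an affine plane curve is singular or needs a careful projective model of genus $g = (q-1)/2$, and one must make sure the point count and the cohomological computation are carried out on the correct smooth projective model so that Theorem~\ref{thm:Weilconjectures} and the Proposition both apply cleanly.
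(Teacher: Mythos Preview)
Your proposal follows essentially the same route as the paper: reduce to the double-root criterion for $\Lambda_D = 0$, identify $L(s,\chi_D)$ with the numerator of the zeta function of $y^2 = x^q - x$, exploit the $\F_q$-action together with Katz's theorem to compute the Frobenius eigenvalues, and conclude that $L(s,\chi_D) = (qT^2 \pm 1)^g$ has a repeated root. Two small corrections: the eigenvalues are $\pm(\sqrt{p^*})^r$ with $p^* = \left(\tfrac{-1}{p}\right)p$, so the sign in $qT^2 \pm 1$ genuinely depends on $q$ (your blanket claim $\alpha_i = \pm i\sqrt{q}$ is not always right), and your $q=3$ parenthetical is off --- there the two eigenvalues are distinct and $g=1$, so no double root arises from this construction; the paper's Proposition~\ref{prop:maximizer} tacitly needs $g\geq 2$, i.e.\ $q\geq 5$.
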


To prove this theorem, we need the following key lemma.

\begin{lemma}
$L(s,\chi_D)$ has a double root if and only if $\Lambda_D = 0$.
\end{lemma}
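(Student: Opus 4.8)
The plan is to connect the statement $\Lambda_D = 0$ to the existence of a double (i.e., repeated) root of $L(s,\chi_D)$ by passing through the deformed $L$-function $\Xi_t(x,\chi_D)$ and the zeta-function factorization proved above. First I would unpack what a ``double root of $L(s,\chi_D)$'' means: by the factorization $Z(X,s) = Z(\mathbb{P}^1,s) L(s,\chi_D)$ together with Theorem \ref{thm:Weilconjectures}, the numerator $P(T) = L(s,\chi_D)$ (with $T = q^{-s}$) factors as $\prod_{i=1}^{2g}(1-\alpha_i T)$ with $|\alpha_i| = \sqrt q$. Since $X$ is the hyperelliptic curve $y^2 = D(x)$ with $P \in \ZZ[T]$ of even degree $2g$ and real coefficients, the $\alpha_i$ come in complex-conjugate pairs, and by the functional equation $\xi(s,\chi_D) = \xi(1-s,\chi_D)$ the multiset $\{\alpha_i\}$ is stable under $\alpha \mapsto q/\alpha$. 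The claim ``$L$ has a double root'' should be read as: some $\alpha_i$ occurs with multiplicity $\geq 2$.

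Second, I would translate the condition on $\Xi_t$. Writing a zero of $L(s,\chi_D)$ as $q^{-s} = \alpha_i^{-1}$, i.e. $s = \log_q \alpha_i$, and recalling $\Xi(x,\chi_D) = \xi(\tfrac12 + i\tfrac{x}{\log q}, \chi_D)$, the zeros of $\Xi_0(\cdot,\chi_D)$ in $x$ correspond (via $\alpha_i = \sqrt q\, e^{i\theta_i}$, since $|\alpha_i| = \sqrt q$) to the angles $x = \theta_i$ on the circle. Because the $\alpha_i$ all have absolute value exactly $\sqrt q$ (Riemann Hypothesis for curves), \emph{every} zero of $\Xi_0(\cdot,\chi_D)$ is real; and the order of the zero at $x = \theta_i$ equals the multiplicity of $\alpha_i$ in $P$. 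Thus $L(s,\chi_D)$ has a double root if and only if $\Xi_0(x,\chi_D)$ has a real zero of order $\geq 2$.

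Third, I would invoke the two lemmas relating $\Xi_0$ and $\Lambda_D$. If $\Xi_0(x,\chi_D)$ has a zero of order $\geq 2$, then by Lemma \ref{lem:doublezeroes} (with $t_0 = 0$) we get $0 \leq \Lambda_D$; combined with the function-field Riemann Hypothesis, which gives $\Lambda_D \leq 0$, this forces $\Lambda_D = 0$. Conversely, if $\Xi_0(x,\chi_D)$ has no double zero, then by the partial converse (Lemma stated just after Lemma \ref{lem:doublezeroes}) we get $\Lambda_D < 0$, hence $\Lambda_D \neq 0$. Taking the contrapositive: $\Lambda_D = 0$ implies $\Xi_0(x,\chi_D)$ has a double zero, which by the second step means $L(s,\chi_D)$ has a double root. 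Together with the forward direction this is the desired equivalence.

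The only real subtlety — and the step I would be most careful about — is the precise dictionary between ``order of vanishing of $\Xi_0(x,\chi_D)$ as a function of $x$'' and ``multiplicity of $\alpha_i$ as a root of $P(T)$.'' One must check that the change of variables $T = q^{-s}$, $s = \tfrac12 + i\tfrac{x}{\log q}$, and the factor $q^{gs}$ in the completion are all local analytic isomorphisms near the relevant points, so that orders of vanishing are preserved; and one must make sure the $\cos$-expansion \eqref{eq:deformedLfunc} of $\Xi_t$ is genuinely the one obtained from $P(T)$, so that ``$P$ has a repeated root'' is equivalent to ``two of the angles $\theta_i$ coincide.'' None of this is deep, but it is the place where an off-by-one or a conflation of ``root of $L$'' with ``root of $\xi$'' could creep in, so I would state it as an explicit sub-lemma.
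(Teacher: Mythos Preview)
Your proposal is correct and follows essentially the same route as the paper's own proof: the paper simply cites \cite[Lemma 3.22, Remark 3.23]{ACM} for the forward implication and \cite[Lemma 3.11]{ACM} for the converse, which are precisely the statements you invoke (Lemma~\ref{lem:doublezeroes} together with $\Lambda_D \leq 0$ from the Riemann Hypothesis over function fields, and the unnamed partial converse lemma, respectively). Your additional care in spelling out the dictionary between multiplicities of the $\alpha_i$ and orders of vanishing of $\Xi_0$ is exactly the content the paper leaves implicit in those citations.
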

\begin{proof}
If $L(s, \chi_D)$ has a double root, then the fact that $\Lambda_D = 0$ follows from \cite[Lemma 3.22, Remark 3.23]{ACM}. The converse is a consequence of \cite[Lemma 3.11]{ACM}.
\end{proof}
The immediate consequence of this lemma is the following.
\begin{cor}
If $L(s,\chi_D)$ has a double zero, then Newman's conjecture is true for any family $\mathcal{F}$ containing $D$. \end{cor}

To show that $\sup_{(D,q) \in \mathcal F} \Lambda_D = 0$, it suffices to find $D$ such that $L(s, \chi_D)$ has a double root. In this case, $\Lambda_D = 0$, so the supremum is actually a maximum.

\begin{prop}
\label{prop:maximizer}
Let $D\in\mathbb{F}_q[x]$ be given by $D(x) = x^q - x$. Then $L(s,\chi_D)$ has a double root.
\end{prop}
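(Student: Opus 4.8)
The goal is to show that for $D(x) = x^q - x$ over $\mathbb{F}_q$, the $L$-function $L(s,\chi_D)$ has a double root. By the preceding Proposition, $L(s,\chi_D)$ is the numerator $P(T)$ of the Hasse--Weil zeta function of the hyperelliptic curve $X \colon y^2 = x^q - x$, and by Theorem~\ref{thm:Weilconjectures} this numerator factors as $\prod_{i=1}^{2g}(1 - \alpha_i T)$ with $|\alpha_i| = q^{1/2}$, where $g = (q-1)/2$ is the genus. A double root of $L$ as a polynomial in $T = q^{-s}$ corresponds to a repeated value among the $\alpha_i$. The plan is therefore to compute the $\alpha_i$ explicitly — or rather, to show that all of them are equal.

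**Key steps.** First I would identify the curve $y^2 = x^q - x$: since $x^q - x = \prod_{a \in \mathbb{F}_q}(x - a)$, this is a hyperelliptic curve with all $q$ Weierstrass points rational over $\mathbb{F}_q$. Second, I would invoke the cited result of Katz \cite{Katz} (and the $\ell$-adic cohomology set-up promised at the start of Section~\ref{sec:results}): the curve $y^2 = x^q - x$ is, up to the relevant identifications, the Artin--Schreier-type / superelliptic curve whose Jacobian is supersingular, and in fact $H^1_{\acute{e}t}(X, \overline{\Ql})$ is pure of a single Frobenius eigenvalue. Concretely, the claim I want is that every $\alpha_i$ equals $-\sqrt{q}$ (equivalently, Frobenius acts on $H^1$ as the scalar $-\sqrt{q}$, when $q$ is a square; in general the eigenvalues are all roots of unity times $\sqrt q$ that coincide). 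Third, from this I would conclude $P(T) = (1 + \sqrt{q}\,T)^{2g}$ (or $(1 - \sqrt q\,T)^{2g}$), which since $g \geq 1$ has $T = \mp q^{-1/2}$ as a root of multiplicity $2g \geq 2$ — in particular a double root — establishing the proposition. As a consistency check one can verify via Corollary~\ref{cor:pointcounting} that $\#X(\mathbb{F}_q) = 1 + q - 2g\cdot(\mp\sqrt q)$ matches the direct count of points on $y^2 = x^q - x$ over $\mathbb{F}_q$ (for each of the $q$ values $a \in \mathbb{F}_q$ one has $y^2 = 0$, giving exactly one point, plus the point at infinity), and similarly that $X$ is maximal or minimal over $\mathbb{F}_{q^2}$ in the sense of the Definition above.

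**Main obstacle.** The crux is Step~2: proving that the Frobenius eigenvalues on $H^1_{\acute{e}t}(X,\overline{\Ql})$ are all equal. This is where the geometric input — the result of Katz \cite{Katz} on the curve $y^2 = x^q - x$ and supersingularity — does the real work, and the write-up must state precisely which statement from \cite{Katz} is being used and how it forces the eigenvalues to coincide rather than merely to have the same absolute value $\sqrt q$ (the Weil bound alone is not enough). The rest — the reduction to the numerator of the zeta function, the factorization, and extracting a root of multiplicity $\geq 2$ — is then formal, given Theorem~\ref{thm:Weilconjectures} and the Proposition identifying $L(s,\chi_D)$ with $P(T)$.
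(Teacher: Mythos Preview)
Your overall strategy---identify $L(s,\chi_D)$ with the numerator $P(T)$ of the zeta function of $X:y^2=x^q-x$, then use the $\mathbb{F}_q$-action and Katz's result to pin down the Frobenius eigenvalues on $H^1_{\acute{e}t}(X,\overline{\mathbb{Q}_\ell})$---is exactly the paper's approach. The gap is in Step~2: your concrete claim that \emph{all} the $\alpha_i$ coincide (so that $P(T)=(1\pm\sqrt{q}\,T)^{2g}$) is false, and your own consistency check exposes it. You correctly count $\#X(\mathbb{F}_q)=q+1$; by Corollary~\ref{cor:pointcounting} this forces $\sum_i\alpha_i=0$, which is impossible if the $\alpha_i$ are all equal and nonzero.

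What the paper actually does is make the group action explicit: $a\in\mathbb{F}_q$ acts by $(x,y)\mapsto(x+a,y)$, and Katz's criterion (Theorem~\ref{thm:Katz}) is applied character by character. One computes the sums $S(X/\mathbb{F}_q,\chi_a,n)$ and recognizes them, via Hasse--Davenport, as powers of quadratic Gauss sums over $\mathbb{F}_q$. The outcome is that $H^1$ decomposes as the sum of the $q-1$ nontrivial characters of $\mathbb{F}_q$, each occurring once, and the Frobenius eigenvalue on the $\chi_a$-isotypic line is $\pm(\sqrt{p^*})^r$ (with $q=p^r$ and $p^*=(-1)^{(p-1)/2}p$), the sign depending on whether $a$ is a square in $\mathbb{F}_q^\times$. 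Thus there are exactly \emph{two} eigenvalues, each with multiplicity $g=(q-1)/2$, and
\[
L(s,\chi_D)=P(T)=\bigl(1-(p^*)^{r}T^2\bigr)^{g}=\bigl(1\mp qT^2\bigr)^{g}.
\]
This has a root of multiplicity $g$, hence a double root once $g\geq 2$ (i.e.\ $q\geq 5$). So the missing idea is not ``supersingularity forces a single eigenvalue'' but rather ``the additive $\mathbb{F}_q$-symmetry splits $H^1$ into lines on which Frobenius acts by Gauss sums, and those Gauss sums take only two values.'' Your write-up needs that computation (or at least a precise citation to the relevant statement in \cite{Katz}) in place of the incorrect scalar-action claim.
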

\begin{remark}
In fact, $L(s,\chi_D)$ has a root of order $g$, and $L(s,\chi_D)$ is explicitly given by $L(s,\chi_D) = (T^2q\pm 1)^g$.
\end{remark}
\begin{proof}[Proof of Proposition \ref{prop:maximizer}]
The curve $X : y^2 = x^q - x$ carries an action of $G=\mathbb{F}_q$ by $\F_q$-linear automorphisms. That is, the action of $\F_q$ commutes with the Frobenius map. For $a \in \F_q$, the action is defined by
\begin{equation}
a \cdot(x,y)\ =\ (x+a, y).
\end{equation}\\
By functoriality, the cohomology groups $\HiX$ carry an action of $G$ as well. In certain nice cases, $\HstarX$ splits up into distinct irreducible representations of $G$ (i.e., is multiplicity free). In this case, this means $\HstarX$ is a sum of characters of $G$. Since the action of Frobenius commutes with the action of $G$, we have a well-defined action of Frobenius on $\HstarX[\chi]$, the $\chi$-isotypic component for $\chi$ a character of $G$. Assuming the multiplicity-free hypothesis, these spaces are $1$-dimensional and Frobenius acts as a scalar, which is therefore a Frobenius eigenvalue. Since the only information needed to construct the zeta function are the Frobenius eigenvalues, we can construct the zeta function if we can understand $\HstarX$ as a representation of $G$ and how Frobenius acts. It is a result of Nick Katz (restated below as Theorem \ref{thm:Katz}) that gives conditions for the above to be true and also gives the Frobenius eigenvalues explicitly as Gauss sums. The following theorem gives us the decomposition of $\HstarX$ and the Frobenius eigenvalues, which finishes the proof.
\end{proof}

\begin{theorem}[Katz \cite{Katz}] \label{thm:Katz}
Let $X \slash \F_q$ be projective and smooth, and $G$ a finite group acting on $X$ by $\F_q$-linear automorphisms, and $\rho$ an irreducible complex (or $\ell$-adic) representation of $G$. Define
\begin{equation}
S(X / \F_q, \rho, n)\ :=\ \frac{1}{\# G} \sum_{g \in G} \textnormal{Tr}(\rho(g))\, \#\textnormal{Fix}(\frob_p \circ g^{-1}).
\end{equation}
Then the following are equivalent.\\
\\
(1) The multiplicity of $\rho$ is one in $\HizeroX$ and zero in $\HiX$ for $i \neq i_0$. \\
\\
(2) For all $n \geq 1$, we have
\begin{align}
| S(X \slash F_q, \rho, n) |\ =\ (\sqrt{q})^{i_0 n}.
\end{align}
(3) $\frob$ acts on $\HiX$ by the scalar $(-1)^{i_0} S(X \slash \F_q, \rho, 1)$.
\end{theorem}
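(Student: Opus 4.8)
The plan is to funnel all three conditions through $\ell$-adic cohomology, reducing them to a single identity that expresses $S(X/\F_q,\rho,n)$ in terms of how $\rho$ sits inside $\HstarX$ and how geometric Frobenius acts there. Fix a prime $\ell\neq\mathrm{char}\,\F_q$ and write $H^i:=\HiX$; since $X$ is smooth and projective these are finite dimensional, carry a geometric Frobenius operator $F$, and --- because $G$ acts by $\F_q$-automorphisms --- a commuting linear $G$-action over $\overline{\Ql}$, a field of characteristic zero, so the representation theory of $G$ behaves exactly as over $\CC$ (semisimplicity, characters, orthogonality). For $g\in G$ and $n\geq 1$, the endomorphism $\frob_{q^n}\circ g^{-1}$ of $X_{\overline{\F_q}}$ --- the $n$-dependence of $S$ entering through the $q^n$-power Frobenius --- has graph meeting the diagonal transversally, since $d\frob_{q^n}=0$ forces $1-d(\frob_{q^n}\circ g^{-1})$ to be invertible at every fixed point; hence the Grothendieck--Lefschetz trace formula gives $\#\mathrm{Fix}(\frob_{q^n}\circ g^{-1})=\sum_i(-1)^i\mathrm{Tr}(g\cdot F^n\mid H^i)$, where $g$ acts via the natural $G$-action. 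Multiplying by $\mathrm{Tr}\,\rho(g)$, averaging over $G$, decomposing each $H^i$ into $G$-isotypic pieces --- on each of which $F$, being $G$-equivariant, acts through the multiplicity space by Schur's lemma --- and invoking character orthogonality collapses the sum to
\[
S(X/\F_q,\rho,n)\ =\ \sum_i (-1)^i\,\mathrm{Tr}\bigl(F^n \mid M^i\bigr),\qquad M^i:=\Hom_G(\rho,H^i)
\]
(up to the routine bookkeeping that may replace $\rho$ by its contragredient, which is immaterial below). Here $\dim M^i$ is precisely the multiplicity of $\rho$ in $H^i$, and the eigenvalues of $F$ on $M^i$ are among the Frobenius eigenvalues on $H^i$, hence --- by Deligne's purity theorem for smooth projective varieties, the degree-$i$ part of the Weil conjectures (for the paper's application $X$ is a curve and Theorem \ref{thm:Weilconjectures} already suffices) --- all of absolute value $q^{i/2}$.

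With this master identity, $(1)\Rightarrow(2)$ and $(1)\Rightarrow(3)$ are immediate: if $\dim M^{i_0}=1$ and $M^i=0$ for $i\neq i_0$, then $F$ acts on $M^{i_0}$ by a single scalar $\beta$ with $|\beta|=q^{i_0/2}$, so $S(X/\F_q,\rho,n)=(-1)^{i_0}\beta^n$; this has absolute value $(\sqrt q)^{i_0 n}$ for every $n$, which is (2), while setting $n=1$ identifies the Frobenius scalar on the $\rho$-isotypic part of $H^{i_0}$ as $\beta=(-1)^{i_0}S(X/\F_q,\rho,1)$, which is (3). The implication $(3)\Rightarrow(1)$ is trivial, since (3) includes the multiplicity statement of (1).

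The real content is $(2)\Rightarrow(1)$, and here I would work with the squared quantity. Write $S_n:=S(X/\F_q,\rho,n)=\sum_{a,j}(-1)^a(\beta^a_j)^n$, where the $\beta^a_j$ are the $F$-eigenvalues on the $M^a$ listed with multiplicity, so $|\beta^a_j|=q^{a/2}$. Fixing an embedding $\overline{\Ql}\hookrightarrow\CC$,
\[
|S_n|^2\ =\ S_n\overline{S_n}\ =\ \sum_{v}\,b_v\,v^n,\qquad b_v:=\!\!\!\sum_{\substack{(a,j,a',j')\\ \beta^a_j\overline{\beta^{a'}_{j'}}=v}}\!\!\!(-1)^{a+a'}\ \in\ \ZZ,
\]
the outer sum over the distinct values $v$ of the products $\beta^a_j\overline{\beta^{a'}_{j'}}$. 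Hypothesis (2) says the left-hand side equals $q^{i_0 n}$ for every $n\geq 1$, so
\[
\sum_v b_v\,\frac{vt}{1-vt}\ =\ \sum_{n\geq 1}|S_n|^2\,t^n\ =\ \frac{q^{i_0}t}{1-q^{i_0}t}
\]
as rational functions; comparing poles forces $b_v=0$ for every $v\neq q^{i_0}$ and $b_{q^{i_0}}=1$. To read off the multiplicities, note that a product $\beta^a_j\overline{\beta^{a'}_{j'}}$ can equal the positive real $q^i$ only when $|\beta^a_j|\,|\beta^{a'}_{j'}|=q^i$, i.e. $a+a'=2i$, in which case its sign $(-1)^{a+a'}$ equals $+1$; hence $b_{q^i}$ is a sum of $+1$'s containing the $\dim M^i$ diagonal terms $(i,j,i,j)$, so $b_{q^i}\geq\dim M^i\geq 0$. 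Therefore $b_{q^i}=0$ forces $M^i=0$ for all $i\neq i_0$; and then $\dim M^{i_0}\geq 1$ (else $S_n\equiv 0\neq q^{i_0 n/2}$), while $b_{q^{i_0}}=\sum_{\text{distinct }\lambda\in M^{i_0}}(\text{mult }\lambda)^2\geq\dim M^{i_0}$ together with $b_{q^{i_0}}=1$ forces $\dim M^{i_0}=1$. That is exactly (1).

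I expect the main obstacle to be precisely this last step: the key realization is that packaging condition (2) as an identity of rational functions for $|S_n|^2$ --- rather than for $S_n$ itself --- is what lets Deligne's purity do the work, since the $q$-power moduli of the cross terms $\beta^a_j\overline{\beta^{a'}_{j'}}$ keep them off the target $q^{i_0 n}$ unless $a=a'$, after which the poles separate and every multiplicity is forced. The other essential input, used silently throughout, is purity in all cohomological degrees for smooth projective $X$ (beyond the curve case of Theorem \ref{thm:Weilconjectures}); the remaining ingredients are the Grothendieck--Lefschetz formula and elementary character theory.
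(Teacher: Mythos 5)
The paper never proves this statement: it is imported from Katz \cite{Katz} and used as a black box, so there is no internal proof to compare against. Your argument is correct and is essentially the standard route (and close in spirit to Katz's original): the master identity $S(X/\F_q,\rho,n)=\sum_i(-1)^i\,\mathrm{Tr}\bigl(F^n\mid \Hom_G(\rho,\HiX)\bigr)$ (up to contragredient bookkeeping) via the Grothendieck--Lefschetz trace formula --- with the transversality of $\frob_{q^n}\circ g^{-1}$ justified exactly as you say --- plus isotypic decomposition and character orthogonality; then $(1)\Rightarrow(2),(3)$ are immediate, and your $(2)\Rightarrow(1)$ step is the real content and is sound: expanding $|S_n|^2=\sum_v b_v v^n$, matching poles of the generating function to force $b_v=\delta_{v,q^{i_0}}$, and using purity to see that every pair contributing to $b_{q^i}$ has $a+a'=2i$ and hence sign $+1$, so $b_{q^i}\ge\dim M^i$ and $b_{q^{i_0}}=\sum_\lambda(\mathrm{mult}\,\lambda)^2\ge\dim M^{i_0}$, which kills all other degrees and pins the multiplicity at one. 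Two caveats, both traceable to the paper's loose restatement rather than to your argument: the displayed definition of $S$ omits the $n$-dependence (your reading, $\#\mathrm{Fix}(\frob_q^n\circ g^{-1})$, is the intended one), and condition (3) as literally printed only asserts that Frobenius acts on (the $\rho$-isotypic part of) the degree-$i_0$ cohomology by the scalar $(-1)^{i_0}S(X/\F_q,\rho,1)$, which by itself does not visibly imply (1); your claim that ``$(3)\Rightarrow(1)$ is trivial'' therefore rests on the intended reading in which (3) includes the multiplicity-one statement, as it does in Katz. With that understanding, and with Deligne's purity supplying the input beyond the curve case of Theorem \ref{thm:Weilconjectures} exactly as you note, the proof is complete.
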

Now we compute the sums $S(X \slash \F_q, \chi, n)$ as $\chi$ ranges over the characters of $G=\F_q$ (since $G$ is abelian its irreducible representations are characters). The characters of $\F_q$ are parametrized by $\F_q$ itself, as they take the form $\chi_a$ where $\chi_a (b) = \zeta_p^{Tr(ab)}$, where $\zeta_p$ is a primitive $p$\textsuperscript{th} root of unity.
\begin{theorem}
The conditions in Theorem \ref{thm:Katz} hold for $X: y^2=x^q-x / \F_q$. Let $q=p^r$ and $p^*= \left( \frac{-1}{p} \right)p$. As a representation of $G=\F_q$
\begin{equation}
\Hone\ =\ \sum_{\chi\textrm{\ {\rm nontrivial}}} \chi.
\end{equation}
The Frobenius eigenvalues are $\sqrt{p^*}^{r}$ and $-\sqrt{p^*}^{r}$, both with multiplicity $\frac{q-1}{2}$.
\end{theorem}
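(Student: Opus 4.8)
The plan is to apply Theorem~\ref{thm:Katz} to $X\colon y^2=x^q-x$ over $\F_q$, with $G=\F_q$ acting by the translations $\tau_b\colon(x,y)\mapsto(x+b,y)$, one irreducible character at a time; everything then comes down to a single explicit evaluation of the twisted point-counts $S(X/\F_q,\chi_a,n)$. First I would record the elementary geometry. Since $\partial_x(x^q-x-y^2)=qx^{q-1}-1=-1\neq0$ in characteristic $p$, the affine curve is smooth, and as $\deg(x^q-x)=q$ is odd its smooth projective model $X$ has a single, $\F_q$-rational, point $\infty$; thus $X$ is smooth projective of genus $g=(q-1)/2$ and $\dim\Hone=2g=q-1$. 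Each $\tau_b$ is an automorphism defined over $\F_q$ — here one uses $b^q=b$ to see $(x+b)^q-(x+b)=x^q-x$ — and fixes $\infty$, so $G=\F_q$ acts by $\F_q$-linear automorphisms; its $q$ one-dimensional irreducible representations are the characters $\chi_a$ ($a\in\F_q$), of which $\chi_0$ is trivial and the other $q-1$ nontrivial.

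The crux is the computation of $S(X/\F_q,\chi_a,n)$ for $a\neq0$. Writing $\frob$ for the $q$-power Frobenius, a point of $X(\overline{\F_q})$ is fixed by $\frob^n\circ\tau_b^{-1}$ exactly when it is $\infty$, or it is affine with $x^{q^n}-x=b$, $y^2=x^q-x$, and $y\in\F_{q^n}$. Two short observations in finite-field theory drive everything: \emph{(i)} if $x^{q^n}-x=b\in\F_q$ then $x^q-x\in\F_{q^n}$, since $(x^q-x)^{q^n}=(x^{q^n})^q-x^{q^n}=(x+b)^q-(x+b)=x^q-x$; and \emph{(ii)} restricted to the coset $\{x:x^{q^n}-x=b\}$, which has $q^n$ elements, the map $x\mapsto x^q-x$ is exactly $q$-to-one onto $\{w\in\F_{q^n}:\operatorname{Tr}_{\F_{q^n}/\F_q}(w)=b\}$ — this is additive Hilbert~90 for $t\mapsto t^q-t$ on $\F_{q^n}$ together with the telescoping identity $\operatorname{Tr}_{\F_{q^n}/\F_q}(x_0^q-x_0)=x_0^{q^n}-x_0=b$ for any $x_0$ in the coset. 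By \emph{(i)} the number of admissible $y$ above a given $x$ is $1+\eta_n(x^q-x)$, with $\eta_n$ the quadratic character of $\F_{q^n}$, so
\[
\#\mathrm{Fix}(\frob^n\circ\tau_b^{-1})=1+q^n+\sum_{x:\,x^{q^n}-x=b}\eta_n(x^q-x).
\]
Weighting by $\chi_a(b)$ and summing over $b\in\F_q$ kills the terms $1$ and $q^n$ (since $\sum_b\chi_a(b)=0$), and by \emph{(ii)} and transitivity of the trace — so that $\chi_a(\operatorname{Tr}_{\F_{q^n}/\F_q}w)=\zeta_p^{\operatorname{Tr}_{\F_{q^n}/\F_p}(aw)}$ — the remainder collapses, the $\tfrac1q$ in the definition of $S$ cancelling the $q$-to-one multiplicity, to a Gauss sum over $\F_{q^n}$:
\[
S(X/\F_q,\chi_a,n)=\sum_{w\in\F_{q^n}}\eta_n(w)\,\zeta_p^{\operatorname{Tr}_{\F_{q^n}/\F_p}(aw)}.
\]

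To finish: for $a\neq0$ this is a Gauss sum of a nontrivial multiplicative by a nontrivial additive character, so $|S(X/\F_q,\chi_a,n)|=q^{n/2}=(\sqrt q)^{1\cdot n}$ for every $n$; by Theorem~\ref{thm:Katz} this forces $i_0=1$ and gives each nontrivial $\chi_a$ multiplicity one in $\Hone$ and zero in $\Hzero$ and $\Htwo$. There are $q-1$ nontrivial characters, each contributing dimension one, and $\dim\Hone=2g=q-1$, so $\Hone=\bigoplus_{\chi\neq1}\chi$ with no room for $\chi_0$ (consistently with $X/G\cong\P^1$, since $\F_q(x,y)^G=\F_q(x^q-x,y)=\F_q(y)$). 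Finally, part~(3) of Theorem~\ref{thm:Katz} says $\frob$ acts on $\Hone[\chi_a]$ by $-S(X/\F_q,\chi_a,1)=-\sum_{w\in\F_q}\eta(w)\chi_a(w)=-\eta(a)\,g(\eta,\psi)$, where $g(\eta,\psi)$ is the quadratic Gauss sum of $\F_q$; by Hasse--Davenport this reduces to the Gauss sum of $\F_p$, whose square is $\eta_0(-1)p=p^*$, so $g(\eta,\psi)=(-1)^{r-1}\sqrt{p^*}^{\,r}$ and the eigenvalue is $\eta(a)(-\sqrt{p^*})^{r}$. As $a$ runs over $\F_q^\times$, $\eta(a)$ equals $+1$ for $(q-1)/2$ values and $-1$ for $(q-1)/2$ values, so the Frobenius eigenvalues on $\Hone$ are $\sqrt{p^*}^{\,r}$ and $-\sqrt{p^*}^{\,r}$, each with multiplicity $\tfrac{q-1}{2}$.

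The one real obstacle is the middle step: showing $S(X/\F_q,\chi_a,n)$ is a Gauss sum over $\F_{q^n}$. This rests on the two finite-field lemmas \emph{(i)}, \emph{(ii)} and careful bookkeeping in the tower $\F_q\subseteq\F_{q^n}\subseteq\F_{q^{pn}}$; once that identification is in hand, Katz's theorem and the classical evaluation of the quadratic Gauss sum (via Hasse--Davenport) finish the proof mechanically.
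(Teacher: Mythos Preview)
Your proof is correct and follows essentially the same strategy as the paper: compute the twisted point counts $S(X/\F_q,\chi_a,n)$, identify them as Gauss sums over $\F_{q^n}$, invoke Katz's criterion to get the multiplicity-one decomposition of $\Hone$, and then evaluate the eigenvalue $-S(X/\F_q,\chi_a,1)$ via Hasse--Davenport and the classical quadratic Gauss sum. The only cosmetic difference is the parametrization of the fixed-point count---you sum over $x$ (equivalently over $w=x^q-x$) and count admissible $y$'s via $1+\eta_n(w)$, whereas the paper sums over $y\in\F_{q^n}$ and counts admissible $x$'s via the trace condition $\operatorname{Tr}(y^2)=a$; these unwind to the same Gauss sum. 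Your use of the dimension count $\dim\Hone=2g=q-1$ to exclude the trivial character is a clean alternative to the paper's separate treatment of $\chi_{\mathrm{triv}}$.
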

\begin{proof}
Our projectivized curve $X$ is given by $y^2 z^{q - 2} = x^q - x z^{q - 1}$, which is easily checked to be smooth. We have the following equality:
\begin{align}
S(X, \chi, n ) \ =\  \frac{1}{q} \sum_{a \in \F_q} \chi(a) \#Fix(\frob_q^n \circ [-a]).
\end{align}
Now we must determine when a point $(x,y)$ is fixed by $\frob_q^n \circ[-a]$. This is when $y^{q^n} = y$ and $x^{q^n} -x = a$. That means $y \in \F_{q^n}$ and $Tr(x^q-x)=a$, where $Tr: \F_{q^n} \rightarrow \F_q$ is the field theoretic trace. For a fixed $y \in \F_{q^n}$, when $Tr(y^2) = a$, we have $q$ fixed points corresponding to the $q$ distinct solutions to $x^q-x=a$, otherwise we have $0$ fixed points. Define
\begin{equation}
I(y,a)\ =\ \left\{
        \begin{array}{ll}
            1 & $if$ \  Tr(y) = a \\
            0 & $otherwise$.
        \end{array}
    \right.
\end{equation}
Then we have
\begin{align}
S(X, \chi, n) &\ =\ \frac{1}{q} \sum_{a \in \F_q} \chi(a) \sum_{y \in \F_{q^n}} q I(y,a) \nonumber\\
&\ =\ \sum_{y \in \F_{q^n}} \sum_{a \in \F_q} \chi(a) I(y,a) \nonumber\\
&\ =\ \sum_{y \in \F_{q^n}} \chi(Tr(y^2)) \nonumber\\
&\ =\ \sum_{y \in \F_{q^n}} \chi(Tr(y)) \left( \frac{N_m(y)}{q} \right).
\end{align}
Note here $\left( \frac{ \cdot}{q} \right)$ is the $\F_q$-Legendre symbol. Now we apply the Hasse-Davenport relation which says precisely that the above ``Gauss sums" are (up to sign) just powers of Gauss sums over $\F_q$. More precisely, it tells us that
\begin{align}
\left|S(X, \chi, n) \right|\ =\ \left| \sum_{y \in \F_q} \chi(y) \left( \frac{y}{q} \right) \right|^n.
\end{align}
If $\chi$ is nontrivial, it is a well-known result that the inner sum has magnitude $\sqrt{q}$. That is,
\begin{equation}
|S(X, \chi, n)|\ =\ \left\{
        \begin{array}{ll}
            (\sqrt{q})^n & $if$ \ \chi \ $is trivial$\\
            q^n & $otherwise$.
        \end{array}
    \right.
\end{equation}
By Theorem \ref{thm:Katz} that means that
\begin{equation}
\Hone\ =\ \sum_{\chi \text{ nontrivial}} \chi, \
\Htwo\ =\ \chi_{{\rm triv}}
\end{equation}
as representations of $\F_q$. The reason $\Hzero$ doesn't appear is because in order for our results to be true, we must take compactly supported cohomology, which forces $\Hzero= 0$, since $X$ isn't compact. By the equivalent condition of Theorem \ref{thm:Katz}, we know that the Frobenius eigenvalues on $\Hone$ are the sums $(-1)S(X, \chi, 1)$. We also immediately verify that Frobenius acts on $\Htwo$ by the scalar $q$ in accordance with the Weil conjectures. To compute $S(X, \chi, 1)$, we write $q=p^r$, and $\chi = \chi_a$ for some $a \in \F_q$. Applying the Hasse-Davenport relation again and using the computation of the standard quadratic Gauss sum over $\F_p$ gives
\begin{equation}
S(X, \chi_a, 1)\ =\ \left\{
        \begin{array}{ll}
            (-1)^{r+1} (\sqrt{p^*})^r & $if$ \ a \ $is a quadratic residue$ \\
            (-1)^{r} (\sqrt{p^*})^r & $otherwise$.
        \end{array}
    \right.
\end{equation}
\end{proof}

\begin{cor}
Let $\mathcal{F} = \{D\in\mathbb{F}_q[T]\mid D\textrm{ monic, squarefree, of odd degree} \geq 3\}$. Then
\begin{equation}
\sup_{D\in\mathcal{F}}\Lambda_D = 0.
\end{equation}
\end{cor}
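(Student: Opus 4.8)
The plan is to deduce this corollary immediately from the main theorem, Theorem~\ref{thm:mainresult}, by exhibiting a distinguished member of the family whose associated $L$-function has a double root. Concretely, given $q$ an odd prime power, the polynomial $D(x) = x^q - x \in \F_q[T]$ is monic, squarefree (its roots are exactly the $q$ elements of $\F_q$, all simple), and of degree $q \geq 3$, which is odd since $q$ is a power of the odd prime $p$. Hence $(x^q - x, q)$ lies in the family $\mathcal{F}$ described in the statement, so $\mathcal{F}$ is precisely a family of the type to which Theorem~\ref{thm:mainresult} applies.

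First I would invoke Proposition~\ref{prop:maximizer}, which tells us that for this choice $D = x^q - x$ the $L$-function $L(s, \chi_D)$ has a double root; by the key lemma ($L(s,\chi_D)$ has a double root if and only if $\Lambda_D = 0$), this gives $\Lambda_{x^q - x} = 0$. Next I would use Lemma~\ref{lem:doublezeroes} together with the function-field Riemann hypothesis (as recalled in the excerpt, $\Lambda_D \leq 0$ for every good $D$): every good $D$ in $\mathcal{F}$ satisfies $\Lambda_D \leq 0$, and we have just produced one member with $\Lambda_D = 0$. Therefore
\begin{equation}
\sup_{D \in \mathcal{F}} \Lambda_D \ = \ \Lambda_{x^q - x} \ = \ 0,
\end{equation}
and in fact the supremum is attained, so it is a maximum. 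This is exactly the conclusion of Theorem~\ref{thm:mainresult} specialized to this $\mathcal{F}$.

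There is essentially no obstacle here: the only thing to check carefully is that $x^q - x$ genuinely satisfies the ``good pair'' hypotheses — squarefree (clear from the factorization $x^q - x = \prod_{a \in \F_q}(x - a)$), monic, and of odd degree $q \geq 3$ — and that the family as defined contains this element, both of which are routine. All the substantive content (the explicit computation of $L(s,\chi_D)$ via Katz's theorem and the Hasse--Davenport relation, and the equivalence between a double root and $\Lambda_D = 0$) has already been established in the preceding propositions and lemmas, so this corollary is a direct specialization of Theorem~\ref{thm:mainresult}.
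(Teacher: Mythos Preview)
Your proposal is correct and matches the paper's approach: the corollary is stated in the paper without proof, as it follows immediately from Theorem~\ref{thm:mainresult} once one checks that $x^q - x$ is monic, squarefree, and of odd degree $q \geq 3$, hence lies in $\mathcal{F}$. The details you supply (the factorization $x^q - x = \prod_{a \in \F_q}(x-a)$ for squarefreeness, the appeal to Proposition~\ref{prop:maximizer} and the key lemma, and the bound $\Lambda_D \leq 0$ from the function-field Riemann hypothesis) are exactly the implicit ingredients.
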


\begin{cor} %!!! needs to be made precise since fields that they're over vary
Let $\mathcal{F} = \{D\mid \deg D = 2g + 1, 2g + 1 = p^k\textrm{ for some prime }p\}$. Then
\begin{equation}
\sup_{D\in\mathcal{F}}\Lambda_D = 0.
\end{equation}
\end{cor}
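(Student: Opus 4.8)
The plan is to reduce this family to the one handled by Theorem~\ref{thm:mainresult} (equivalently Theorem~\ref{thm:mainresultintro}) by exhibiting, for each admissible $q$, a polynomial $D$ of the prescribed degree whose $L$-function has a double root, since by the key lemma a double root forces $\Lambda_D = 0$, and then the supremum over any family containing $D$ is pinned to $0$ from above by the function-field Riemann hypothesis ($\Lambda_D \le 0$ always). So the entire task is: given $2g+1 = p^k$, produce such a $D \in \F_q[T]$ of degree $p^k$.

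First I would reconcile the setup: the family in this corollary fixes the degree constraint $\deg D = 2g+1 = p^k$ for some prime $p$, but the ground field is some odd prime power $q$; for the family to be nonempty we need $\deg D \ge 3$ and $q$ odd, and we want $\deg D$ to be an odd prime power. The natural choice is to take the base field to be $\F_{p^k}$ itself (so $q = p^k = \deg D$) and let $D(T) = T^q - T$. By Proposition~\ref{prop:maximizer}, $L(s,\chi_D)$ for this $D$ has a root of order $g$, hence in particular a double root (here $g = (q-1)/2 \ge 1$ since $q = p^k \ge 3$). Therefore $\Lambda_D = 0$ by the key lemma, and since $D$ is monic, squarefree (as $T^q - T = \prod_{a \in \F_q}(T-a)$ has distinct roots), of odd degree $q = p^k \ge 3$, it is a legitimate member of $\mathcal F$. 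Combined with $\Lambda_{D'} \le 0$ for every $D' \in \mathcal F$, this gives $\sup_{D \in \mathcal F} \Lambda_D = \max_{D \in \mathcal F}\Lambda_D = 0$.

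Alternatively, and perhaps more cleanly, I would simply observe that this corollary is a direct instance of Theorem~\ref{thm:mainresult}: take $\mathcal F$ to be a family as in the statement together with the pair $(T^q - T, q)$ where $q = p^k$, which is exactly the polynomial whose degree is $2g+1 = p^k$. Then Theorem~\ref{thm:mainresult} applies verbatim and yields $\sup \Lambda_D = \max \Lambda_D = 0$.

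The only genuine subtlety — the step I expect to require the most care in writing up — is the degree bookkeeping: one must be explicit that "$\deg D = 2g+1 = p^k$" together with the standing hypotheses ($q$ odd, $\deg D$ odd, $\deg D \ge 3$) forces $p$ to be odd and $p^k \ge 3$, and that the relevant base field over which $T^q - T$ lives is $\F_{p^k}$, so that the hypothesis $(x^q - x, q) \in \mathcal F$ of Theorem~\ref{thm:mainresult} is met with $q = p^k$. Once that is pinned down, the corollary follows immediately from Theorem~\ref{thm:mainresult} with no further computation.
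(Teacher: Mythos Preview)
Your proposal is correct and follows the same approach as the paper: the corollary is stated without proof as an immediate consequence of Theorem~\ref{thm:mainresult}, and your argument supplies precisely the missing verification that $(T^{q}-T,\,q)$ with $q=p^{k}=2g+1$ lies in $\mathcal{F}$, so the main theorem applies. Your bookkeeping remark that $2g+1$ odd forces $p$ odd (hence $q$ odd) and $p^{k}\ge 3$ is the only point that needs to be made explicit, and you have done so.
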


In addition to the above two corollaries, we can also show that the following family satisfies Newman's conjecture.

\begin{theorem}
Let $D\in\ZZ[T]$ be a square-free monic cubic polynomial. Then there exists a number field $K/\QQ$ such that
\begin{equation}
\sup_{\fp\subseteq\OK}\Lambda_{D_\fp} = 0,
\end{equation}
where $D_\fp$ denotes the reduction of $D$ modulo the prime ideal $\fp$.
\end{theorem}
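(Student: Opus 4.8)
The plan is to reduce the statement to a question about elliptic curves over finite fields and then to invoke the explicit formula for $\Lambda_{D_\fp}$ from \cite{ACM} together with the point-counting machinery of Section \ref{sec:weilconjectures}. Since $D$ is a squarefree monic cubic, the curve $E: y^2 = D(T)$ is an elliptic curve over $\QQ$, and the explicit formula \eqref{eq:explicitlambda} (valid over any prime power, by the remark following it) gives $\Lambda_{D_\fp} = \log\frac{|a_\fp(E)|}{2\sqrt{N\fp}}$, where $a_\fp(E)$ is the trace of Frobenius of the reduction of $E$ at $\fp$. Thus it suffices to produce a number field $K$ and a prime $\fp$ of $\OK$ of good reduction for which $|a_\fp(E)| = 2\sqrt{N\fp}$, i.e. for which the reduction of $E$ is maximal or minimal over the residue field $\F_{N\fp}$; equivalently, for which $\#E(\F_{N\fp}) = N\fp \pm 2\sqrt{N\fp} + 1 = (\sqrt{N\fp}\mp 1)^2$.

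First I would fix an ordinary prime $p$ of good reduction for $E$ (all but finitely many $p$ work), so that the reduction $E_p/\F_p$ has $a_p(E) =: a$ with $p \nmid a$, and factor its Frobenius characteristic polynomial $T^2 - aT + p = (T-\alpha)(T-\bar\alpha)$ in the imaginary quadratic field $\QQ(\alpha)$. The key observation, exactly as in Remark \ref{rmk:pointcounting}, is that passing to extensions of $\F_p$ multiplies the Frobenius eigenvalue, so over $\F_{p^m}$ the curve $E_p$ has trace $\alpha^m + \bar\alpha^m$, and this equals $\pm 2 p^{m/2}$ precisely when $\alpha^m \in \RR$, i.e. when $\alpha^m$ is a real number — which forces $\alpha^m = \pm p^{m/2}$. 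Writing $\alpha = \sqrt{p}\, e^{i\theta}$ with $\theta \in (0,\pi)$, this happens exactly when $m\theta \in \pi\ZZ$, which is possible for some $m \geq 1$ if and only if $\theta$ is a rational multiple of $\pi$ — equivalently, if and only if $\alpha/\bar\alpha$ is a root of unity. In general this fails, so instead I would take $K = \QQ(\sqrt{-p})$ (or $\QQ(j(\alpha))$, an imaginary quadratic field containing $\alpha$) and choose $\fp$ to be a prime of $K$ above $p$: the residue field is then $\F_{p^2}$, and over $\F_{p^2}$ we compute $\#E_p(\F_{p^2}) = p^2 + 1 - (\alpha^2 + \bar\alpha^2) = p^2 + 1 - (a^2 - 2p)$. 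This is not automatically $(\sqrt{p^2}\mp1)^2 = p^2 \mp 2p + 1$ unless $a^2 - 2p = \pm 2p$, i.e. unless $a = 0$ or $a^2 = 4p$.

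So the real content is to arrange $a_p(E) = 0$, i.e. supersingular reduction. I would therefore instead argue: by a theorem of Elkies, every elliptic curve over $\QQ$ has infinitely many supersingular primes $p$, i.e. primes where $a_p(E) = 0$. At such a $p$, the Frobenius eigenvalues are $\pm i\sqrt{p}$ exactly as in Remark \ref{rmk:pointcounting}, so over $\F_{p^2}$ we get $\#E_p(\F_{p^2}) = p^2 + 2p + 1 = (p+1)^2$, which is the maximum possible for genus $1$ over $\F_{p^2}$; hence $E_p$ is maximal over $\F_{p^2}$ and $|a_{p^2}(E)| = 2p = 2\sqrt{p^2}$, giving $\Lambda_{D_\fp} = \log 1 = 0$ for $\fp$ any prime of $K := \QQ(\sqrt{-p})$ above such a $p$ (note $D$ remains squarefree monic cubic mod $\fp$, so $D_\fp$ is good and the curve $y^2 = D_\fp(T)$ is exactly $E_p$ base-changed to $\F_{p^2}$). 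Combined with $\Lambda_{D_\fp} \leq 0$ always (the function-field Riemann hypothesis), this yields $\sup_\fp \Lambda_{D_\fp} = 0$.

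The main obstacle is the input that supersingular primes exist, and in fact that is the crux: without it one is stuck trying to make $\theta/\pi$ rational, which CM-free curves essentially never satisfy in a controllable way. Elkies's theorem supplies exactly what is needed, and it is elementary to check the remaining bookkeeping — that finitely many bad primes (dividing the discriminant of $D$, or where $D$ fails to stay cubic/squarefree) can be discarded, that the residue field of a prime of $\QQ(\sqrt{-p})$ over $p$ is indeed $\F_{p^2}$ when $p$ is inert (which, for $p$ supersingular and $p > 3$, is automatic since $\QQ(\sqrt{-p})$ ramifies only at $p$ — one picks the ramified prime $\fp$, whose residue field is $\F_p$, so more care is needed here: one should instead take $K = \QQ(\sqrt{\ell})$ for a suitable auxiliary prime $\ell$ with $p$ inert, or simply take $K$ to be any number field in which some prime above $p$ has residue degree $2$, e.g. $K = \QQ(\sqrt{u})$ for $u$ a quadratic nonresidue mod $p$). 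Once a residue-degree-$2$ prime $\fp \mid p$ is fixed for a supersingular $p$, the computation of Remark \ref{rmk:pointcounting} applies verbatim and closes the argument.
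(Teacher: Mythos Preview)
Your proposal is correct and, once you finish self-correcting, is essentially the paper's proof: pick a supersingular prime $p$ for $E$ via Elkies, observe (as in Remark~\ref{rmk:pointcounting}) that $E_p$ is then maximal over $\F_{p^2}$, and take $K=\QQ(\sqrt{u})$ with $u$ a quadratic nonresidue mod $p$ so that $p$ is inert and the prime $\fp=p\OK$ has residue field $\F_{p^2}$. The detours through ordinary primes and through $K=\QQ(\sqrt{-p})$ are unnecessary (and the latter is indeed wrong, as you noticed, since $p$ ramifies there), but your final argument matches the paper's exactly.
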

\begin{proof}
It suffices to produce a single prime $\pi$ so that $a_{\pi} (D) = 2\sqrt{p^2} $, so that $\Lambda_{D_\pi} = 0$ by the previous lemma. If we can find $p \in \ZZ$ inert in $K$ with $a_p (D) = 0$, then for $\pi = p \OK$ we have $a_\pi (D) = 2 \sqrt{p^2}$ by the Weil conjectures. Thus $\Lambda_{D_\pi} = 0$, which gives the result. It is important to note that we don't need to take the supremum over all $\pi$, since any $\pi$ as constructed above attains the supremum.

For all but finitely many $p$, we can reduce $y^2=D(x)$ mod $p$ and thereby obtain an elliptic curve over $\F_p$. For these $p$, the condition that $a_p(D) = p+1$ can be rephrased as saying that $p$ is a supersingular prime for $E$ (as long as $p>5$). It is a theorem of Noam Elkies \cite[Theorem 1]{elkies} that for $E \slash \QQ$ an elliptic curve, and any finite set of primes $S$, we can find a supersingular prime for $E$ outside of $S$. This result uses the theory of complex multiplication of elliptic curves. Now we need only choose $d \in \ZZ$ and $p$ a supersingular prime so that $\left( \frac{d}{p} \right)=-1$, which is easily accomplished since we are free to choose $d$ squarefree belonging to a class which is a quadratic non-residue mod $p$. Thus $p$ is inert in $K$, and the proof is complete.
\end{proof}
\begin{remark}\label{heyslugger}
Since we can find a supersingular prime of $E$ outside of any finite set, we might hope to prove something stronger, namely, we might want to fix $K$ beforehand and hope that the collection of supersingular primes of $E$ contains a prime inert in $K$. Unfortunately, this statement can fail for $K$ quadratic.
\end{remark}

%\green{THIS COUNTEREXAMPLE ACTUALLY CAME FROM PRIVATE CORRESPONDENCE BETWEEN DYLAN AND ELKIES, SO I MADE THAT EXPLICITLY CLEAR. THIS ISN'T FROM ANYTHING THAT ELKIES HAS PUBLISHED, BUT WE STILL WANT TO ATTRIBUTE IT TO HIM.}

The following counterexample was suggested to us in correspondence with Noam Elkies.

\begin{example}[Elkies] Consider $X_0(11)$, an elliptic curve over $\QQ$ with $5$-torsion and good reduction away from $11$. For $p \neq 11$, the $5$-torsion points remain distinct mod $p$, giving
\begin{equation}
\# X_0(11)(\F_p)\ \equiv\ 0 \pmod{5}.
\end{equation}
Thus if $p$ is supersingular for $X_0(11)$, we must have $p \equiv 4 \pmod{5}$, which forces $p$ to split in $K=\QQ(\sqrt{5})$. Thus, since supersingularity is equivalent to $a_p=p+1$ only for $p>5$, we check $p=2,3$ and $5$ seperately and see that $a_p \neq 0$. Thus we've shown that for $E=X_0(11)$ and $K=\QQ(\sqrt{5})$, we cannot find a prime $\pi \subset \OK$ so that $\Lambda_{D_\pi} = 0$.
\end{example}

\begin{remark} For a representation theoretic explanation of this counterexample, recall that for elliptic curves $E \slash \QQ$, and primes $\ell$ not dividing the conductor of $E$ we can consider the mod $\ell$ representation attached to E:
\begin{equation}
\overline{\rho_{E, \ell}}: G_{\QQ} \rightarrow GL_2(\F_\ell).
\end{equation}
For $q \not\mid \ell$ and the conductor of $E$, we have that $Tr(\overline{\rho_{E, \ell}}(Frob_q)) \equiv a_q \pmod{\ell}$. When we're in the case that the mod $\ell$ representation is surjective, we can find $q$ so that $a_q \equiv 0 \pmod{\ell}$, which is necessary but not sufficient for $a_q=0$, which we want in order to construct maximal curves. In the case of $E=X_0(11)$, we can compute using SAGE that the mod $5$ representation is not surjective, which helps explain why $a_p \equiv 0 \pmod{5}$ can't be attained. It follows from Serre's open image theorem that the mod $\ell$ representation is surjective.
One avenue for future research is to use surjectivity of the mod $\ell$ representation to strengthen the above theorem as much as possible.
\end{remark}

%%%%%%%%%%%%%%%%%%%%%%%%%%%%%%%%%%%%%%%%%%%%%%%%%%%%%%%%%%%%%%%%%%%%%%%%%%%%%%%%%%%%%%%%%%%%%%%%%%%%%%%%%%%%%%%%%%%%%%%%%%%%%%%%%%%%
% BIBLIOGRAPHY
%%%%%%%%%%%%%%%%%%%%%%%%%%%%%%%%%%%%%%%%%%%%%%%%%%%%%%%%%%%%%%%%%%%%%%%%%%%%%%%%%%%%%%%%%%%%%%%%%%%%%%%%%%%%%%%%%%%%%%%%%%%%%%%%%%%%

\ \\


\begin{thebibliography}{911}



\bibitem{ACM}
	J. Andrade, A. Chang, \& S.J. Miller. (2013). Newman's conjecture in various settings. \emph{Jour. Num. Theory}, 144, 70-91.

\bibitem{B-LGHT}
T.~Barnet-Lamb, D.~Geraghty, M.~Harris, and R.~Taylor.
\newblock A family of {C}alabi-{Y}au varieties and potential automorphy {II}.
\newblock {\em Publ. Res. Inst. Math. Sci.}, 47(1):29--98, 2011.

\bibitem{dB}
	De Bruijn, N. G. (1950).
	The roots of trigonometric integrals.
	\emph{Duke Math. J}, 17(0950), 197-226.

\bibitem{CSV}
	Csordas, G., Smith, W., and Varga, R. S. (1994). Lehmer pairs of zeros, the de Bruijn-Newman constant $\Lambda$, and the Riemann Hypothesis. \emph{Constructive Approximation}, 10(1), 107-129.

\bibitem{satotate1}
L.~Clozel, M.~Harris, and R.~Taylor.
\newblock Automorphy for some {$l$}-adic lifts of automorphic mod {$l$}
  {G}alois representations.
\newblock {\em Publ. Math. Inst. Hautes \'Etudes Sci.}, (108):1--181, 2008.
\newblock With Appendix A, summarizing unpublished work of Russ Mann, and
  Appendix B by Marie-France Vign{\'e}ras.

\bibitem{elkies}
N.D.~Elkies. (1987). The existence of infinitely many supersingular primes for every elliptic curve over $\QQ$. \emph{Inventiones mathematicae}, 89(3), 561-567.


\bibitem{satotate3}
M.~Harris, N.~Shepherd-Barron, and R.~Taylor.
\newblock A family of {C}alabi-{Y}au varieties and potential automorphy.
\newblock {\em Ann. of Math. (2)}, 171(2):779--813, 2010.

\bibitem{Katz}
	N.M.~Katz.(1981). Crystalline cohomology, Dieudonn\'{e} modules, and Jacobi sums. \emph{In Automorphic forms, representation theory and arithmetic} (pp. 165-246). Springer Berlin Heidelberg.

\bibitem{Milne}
    J.~Milne. (1980). \'Etale Cohomology. \emph{Princeton University Press.} Princeton, NJ

\bibitem{New}
  C.M.~Newman. (1976). Fourier transforms with only real zeros. \emph{Proc. Amer. Math Soc.}, 61(2), 245-251.

\bibitem{SGD}
	Saouter, Y., Gourdon, X., and Demichel, P. (2011). An improved lower bound for the de Bruijn-Newman constant. \emph{Mathematics of Computation}, 80(276), 2281-2287.

  \bibitem{Sto}
	J.~Stopple. (2013). Notes on Low discriminants and the generalized Newman conjecture, \emph{Funct. Approx. Comment. Math.}, vol. 51, no. 1 (2014), pp. 23-41.

\bibitem{satotate2}
R.~Taylor.
\newblock Automorphy for some {$l$}-adic lifts of automorphic mod {$l$}
  {G}alois representations. {II}.
\newblock {\em Publ. Math. Inst. Hautes \'Etudes Sci.}, (108):183--239, 2008.

\bibitem{yott}
    D. Yott (2014). Maximal Varieties and Representation Theory. \emph{Unpublished Honors Thesis.} Boston University, Boston, MA.


\end{thebibliography}
\end{document}